\newcommand{\F}{{\mathbb F}}
\newcommand{\Fp}{{\mathbb F_p}}
\newcommand{\fbar}{\overline{f}}
\newcommand{\gbar}{\overline{g}}
\renewcommand{\hbar}{\overline{h}}
\newcommand{\PP}{{\mathbb P}}
\newcommand{\Q}{{\mathbb Q}}
\newcommand{\Qp}{{\mathbb Q}_p}
\newcommand{\R}{{\mathbb R}}
\newcommand{\rev}{{\operatorname{rev}}}
\newcommand{\spl}{{\operatorname{split}}}
\newcommand{\wt}{\operatorname{wt}}
\newcommand{\Z}{{\mathbb Z}}
\newcommand{\Zp}{{\mathbb Z}_p}
\newcommand{\stypes}{{\mathcal S}}
\newcommand{\genA}{{\mathcal A}}
\newcommand{\genB}{{\mathcal B}}
\newcommand{\genR}{{\mathcal R}}
\DeclareFontFamily{OT1}{rsfs}{}
\DeclareFontShape{OT1}{rsfs}{n}{it}{<-> rsfs10}{}
\DeclareMathAlphabet{\mathscr}{OT1}{rsfs}{n}{it}
\newtheorem{theorem}{Theorem}
\newtheorem{proposition}{Proposition}[section]
\newtheorem{corollary}[proposition]{Corollary}
\newtheorem{lemma}[proposition]{Lemma}
\newtheorem{conjecture}[proposition]{Conjecture}
\theoremstyle{definition}%  so the Remark environment is not italicised
\newtheorem{remark}[proposition]{Remark}
\newtheorem{definition}[proposition]{Definition}
\DeclareMathOperator{\Res}{Res}
\def\Z{{\mathbb Z}}
\def\R{{\mathbb R}}
\def\F{{\mathbb F}}
\def\Q{{\mathbb Q}}
\def\Z{{\mathbb Z}}
\def\E{{\mathbb E}}
\def\F{{\mathbb F}}
\def\Q{{\mathbb Q}}
\title{The density of polynomials of degree $n$ over $\Zp$ \\
having exactly $r$ roots in~$\Qp$}
\author{Manjul Bhargava, John Cremona, Tom Fisher,  and Stevan Gajovi\'c }
\begin{document}

\maketitle

\begin{abstract}
We determine the probability that a random  polynomial of degree $n$ over $\Z_p$ has exactly $r$ roots in $\Q_p$, and  show that it is given by a rational function of $p$ that is invariant under replacing $p$ by~$1/p$. 
\end{abstract}

\section{Introduction}
\label{sec:intro}

Let 
$f(x) = c_n x^n + c_{n-1} x^{n-1} + \cdots
+ c_0$ be a random polynomial
having  coefficients $c_0,c_1,\ldots,c_n\in\Z_p$.  
In~this paper, we determine the probability that $f$ has a root in $\Q_p$,  and more generally the probability that $f$ has 
exactly $r$ roots in $\Q_p$. 
More precisely,  we normalise the additive $p$-adic Haar measure $\mu$ on
the set of coefficients $\Zp^{n+1}$ such that $\mu(\Zp^{n+1})=1$, 
and determine the density $\mu(S_r)$ of the set $S_r$ of degree~$n$
polynomials in $\Zp[x]$ having  exactly $r$ roots in $\Qp$. We prove that this density $\mu(S_r)$ is given by a rational function $\rho^\ast(n,r;p)$ of $p$, which satisfies the remarkable  identity $$\rho^\ast(n,r;p)=\rho^\ast(n,r;1/p)$$ for all~$n$, $r$ and~$p$. We also prove that
if $X_n(p)$ is the random variable giving the number of $\Qp$-roots of a random polynomial $f\in\Zp[x]$ of degree $n$, then the $d$-th moment 
of $X_n(p)$ is independent of $n$ provided that~$n \ge 2d - 1$.

Let us now more formally define the probabilities, expectations and generating functions 
required to state our main results. 
Fix a prime $p$ and, for $0 \le r \le n$, let $\rho^*(n,r):=\rho^*(n,r;p)$ denote the
density of polynomials of degree~$n$ over $\Zp$ having  exactly $r$ roots
in $\Qp$. This is also the probability that a binary form of degree $n$
over $\Z_p$ has exactly $r$ roots in $\PP^1(\Q_p)$.
For $0 \le d \le n$, set
\begin{equation}
\label{prob-exp}
\rho(n,d) = \sum_{r=0}^n \binom{r}{d} \rho^*(n,r). 
\end{equation}
Thus $\rho(n,d)$ is the expected number of $d$-sets\footnote{We find it convenient to 
refer to a set of size $d$ as a ``$d$-set''.} of $\Qp$-roots. 
For fixed $n$, determining  $\rho(n,d)$ 
for all $d$ is equivalent to determining  $\rho^*(n,r)$ for all
$r$, via the inversion formula
\begin{equation}
\label{exp-prob}
\rho^*(n,r) = \sum_{d=0}^n (-1)^{d-r} \binom{d}{r} \rho(n,d). 
\end{equation}
Equations~\eqref{prob-exp} and~\eqref{exp-prob} are equivalent to the standard observation that a probability distribution is
determined by its moments; the formulation in terms of $d$-sets
(equivalently in terms of factorial moments)
is   most  convenient for our purposes. 

Analogous to $\rho(n,d)$, let $\alpha(n,d)$ (resp.\  $\beta(n,d)$)  denote the expected number of $d$-sets of $\Q_p$-roots of {\it monic}  polynomials of degree $n$ over $\Z_p$ (resp.\ monic polynomials of degree $n$ over $\Z_p$ that reduce to $x^n$ modulo $p$).   
Define the  generating functions:
\begin{align*}
\genA_d(t) &= (1-t) \sum_{n=0}^\infty \alpha(n,d) t^n; \\
\genB_d(t) &= (1-t) \sum_{n=0}^\infty \beta(n,d) t^n; 
\\
 \genR_d(t) &= (1-t)(1-pt) \sum_{n=0}^\infty
  (p^n+p^{n-1}+\cdots+1)
 \rho(n,d) 
 t^n. 
 \end{align*}
\noindent 
Then we prove the following theorem. 

\begin{theorem}
\label{thm:totsplit} Let $p$ be a prime number and $n$, $d$ any integers such that $0\le d\le n$.  Then: 
\begin{itemize}\item[{\rm (a)}]
For fixed $n$ and $d$, the expectations $\alpha(n,d;p)$,
$\beta(n,d;p)$ and $\rho(n,d;p)$ are rational functions of~$p$, which satisfy the identities:
\begin{align}
\label{fe:rho}
\rho(n,d;p) &= \rho(n,d;1/p); \\[.05in]
\label{fe:alphabeta}
\alpha(n,d;p) &= \beta(n,d;1/p). 
\end{align}
\item[{\rm (b)}]
We have the following power series identities in 
two variables $t$ and $u$: \begin{align}
\label{alphatobeta:mom}
\displaystyle \sum_{d=0}^\infty \genA_d(pt) u^d &= 
   \displaystyle\left( \sum_{d=0}^\infty \genB_d(t) u^d \right)^p; \\[.075in]
\label{rho:mom}
\displaystyle\sum_{d=0}^\infty \genR_d(t) u^d &=   
   \displaystyle\left( \sum_{d=0}^\infty \genA_d(pt) u^d \right)
   \left( \sum_{d=0}^\infty \genB_d(t) u^d \right)= 
   \left( \sum_{d=0}^\infty \genB_d(t) u^d \right)^{p+1}; 
\\[.165in]
\label{betatoalpha:mom}
\genB_d(t) - t \genB_d(t/p) &= \Phi \left( \genA_d(t) - t \genA_d(pt) \right),
\end{align}
where $\Phi$ is the operator on power series that
multiplies the coefficient of $t^n$ by $p^{-\binom{n}{2}}$.
\item[{\rm (c)}]
The power series $\genA_d$, $\genB_d$ and
$\genR_d$ are in fact  polynomials of degree at most $2d$. Moreover,  we have  $\alpha(n,d) = \genA_d(1)$ and  $\beta(n,d) = \genB_d(1)$ for
$n \ge 2d$, and $\rho(n,d) = \genR_d(1)$ for $n \ge 2d - 1$. 
\linebreak
Thus the expectations $\alpha(n,d)$, $\beta(n,d)$, and $\rho(n,d)$ are independent of $n$ provided that $n$ is sufficiently large relative to $d$.
\end{itemize}
\end{theorem}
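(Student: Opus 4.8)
The plan is to make part~(b) the engine and deduce (a) and (c) from it. I would prove \eqref{alphatobeta:mom} and \eqref{rho:mom} together by a ``Hensel factorization along the reduction''. Given a uniformly random monic $f\in\Zp[x]$ of degree $n$, its reduction $\fbar\in\Fp[x]$ is a uniformly random monic degree-$n$ polynomial, i.e.\ a uniformly random effective divisor of degree $n$ on $\Aff^1_{\Fp}$; write it as $\prod_{a\in\Fp}(x-a)^{m_a}$ times a divisor supported away from the $\Fp$-points. Since these factors are pairwise coprime, Hensel's lemma lifts this to a factorization $f=\bigl(\prod_{a}f_a\bigr)f_\infty$ into monic polynomials over $\Zp$ with the prescribed reductions, and --- the key point --- conditionally on the reduction type the tuple $\bigl((f_a)_a,f_\infty\bigr)$ is distributed as independent uniform monic lifts, since the multiplication map is measure preserving. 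The $\Qp$-roots of $f$ are the \emph{disjoint} union of those of the $f_a$ (the factor $f_\infty$ has no root in $\Fp$, hence none in $\Zp$), and each $f_a$, after translating $x\mapsto x+a$, is a uniformly random $\beta$-type polynomial of degree $m_a$; hence the expected number of $d$-sets of roots is a sum over compositions $d=\sum_a d_a$ of $\prod_a\beta(m_a,d_a)$. Summing over reduction types and using that the monic degree-$m$ polynomials over $\Fp$ with no $\Fp$-root have generating function $(1-t)^p/(1-pt)$ (immediate from the zeta function of $\Aff^1_{\Fp}$) yields \eqref{alphatobeta:mom}, the factors $(1-t)$, $(1-pt)$ built into $\genA_d$, $\genB_d$ being designed exactly to absorb the resulting geometric series. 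The identity \eqref{rho:mom} follows in the same way with binary forms in place of polynomials: the number of $d$-sets of roots is unchanged under scaling, so a random primitive form of degree $n$ reduces to a uniform point of $\PP^n(\Fp)$, i.e.\ a uniform effective divisor of degree $n$ on $\PP^1_{\Fp}$; there are now $p+1$ rational points, the point-free divisors again have generating function $(1-t)^p/(1-pt)$, and $1+p+\dots+p^n=\lvert\PP^n(\Fp)\rvert$ is exactly the number of reduction classes, which is why it appears in $\genR_d$. The second equality in \eqref{rho:mom} is then automatic from \eqref{alphatobeta:mom}.

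For \eqref{betatoalpha:mom} a genuinely different and more delicate argument is needed, and I expect it to be the main obstacle. Since all $\Zp$-roots of a $\beta$-type $f$ reduce to $0$, they lie in $p\Zp$; substituting $x\mapsto px$ and clearing the content replaces $f$ by a degree-$n$ polynomial with a less degenerate reduction, and iterating this (equivalently, stratifying by the Newton polygon) expresses $\beta$-type root counts in terms of $\alpha$-type ones. The substitution $x\mapsto px$ scales the coefficients of a degree-$n$ polynomial by $p^0,p^1,\dots,p^n$, so each iteration contributes a power of $p$ quadratic in the degree, and these accumulate into the operator $\Phi$; the terms $t\,\genB_d(t/p)$ and $t\,\genA_d(pt)$ are the inclusion--exclusion corrections for the roots that keep rescaling. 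Pinning down exactly which correction terms appear is the crux.

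Given~(b), part~(a) follows formally. Substituting \eqref{alphatobeta:mom} into \eqref{betatoalpha:mom} turns the latter into $L\,\genB_d=(\text{explicit polynomial in }\genB_0,\dots,\genB_{d-1})$, where $L\colon g\mapsto g-t\,g(t/p)-p\,\Phi\bigl(g(t/p)\bigr)+p\,\Phi\bigl(t\,g(t)\bigr)$ is invertible on power series; so $\genB_d$, then $\genA_d$, then $\genR_d$ are rational functions of $p$, and hence so are $\alpha$, $\beta$, $\rho$. For the functional equations one checks that the pair \eqref{alphatobeta:mom}--\eqref{betatoalpha:mom} is invariant under $p\leftrightarrow 1/p$ combined with $\genA_d\leftrightarrow\genB_d$, giving \eqref{fe:alphabeta}; feeding this and $\sum_d\genR_d(t)u^d=\bigl(\sum_d\genA_d(pt)u^d\bigr)\bigl(\sum_d\genB_d(t)u^d\bigr)$ through the relation between $\genR_d$ and $\rho$ (the product being symmetric in its two factors under $p\leftrightarrow 1/p$) yields \eqref{fe:rho}.

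Finally, for part~(c) I would show by induction on $d$ that $\genB_d$ --- and hence $\genA_d$ (via \eqref{alphatobeta:mom}) and $\genR_d$ (via \eqref{rho:mom}) --- is a polynomial of degree at most $2d$: running the recursion $L\,\genB_d=(\cdots)$, the input has $t$-degree at most $2d$, and the ``resonances'' at which the leading coefficient of $L$ degenerates (e.g.\ at $t$-degree $3$) kill all coefficients of degree $>2d$. Once this is known, $\alpha(n,d)=[t^n]\,\genA_d(t)/(1-t)=\genA_d(1)$ for $n\ge 2d$ and likewise $\beta(n,d)=\genB_d(1)$, because a polynomial of degree $\le m$ divided by $(1-t)$ has $n$-th coefficient equal to its value at $1$ once $n\ge m$. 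For $\rho$ one uses $[t^n]\,\genR_d(t)/\bigl((1-t)(1-pt)\bigr)=\tfrac{p^{n+1}-1}{p-1}\,\genR_d(1)$ for $n\ge 2d-1$: this identity holds precisely because $\genR_d(1)=\genR_d(1/p)$ --- the $t=1$ content of the functional equations from~(a) --- and at $n=2d-1$ the top coefficient $R_{2d}$ contributes $R_{2d}\,\tfrac{p^{0}-1}{p-1}=0$, which is what gains the extra unit over the bound $n\ge 2d$. I expect the degree bound in the induction, together with its interaction with the functional equations at $t=1$, to be the technically fussiest point of part~(c).
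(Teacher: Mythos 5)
Your route to \eqref{alphatobeta:mom} is essentially the paper's own (Hensel-lift the coprime factorization of $\fbar$, use independence of the factors and the count of monic polynomials over $\F_p$ with no $\F_p$-root), and your deduction of (a) from (b) matches Section~\ref{b=>a}. Your binary-form/$\PP^1$ picture for \eqref{rho:mom} is a reasonable variant of what the paper actually does (condition on the reduced degree $m$, with density $\frac{p-1}{p^{n+1}-1}p^m$, and split the roots into those in $\Z_p$ and those in $\Q_p\setminus\Z_p$ via Corollary~\ref{cor:stability}), though you would still owe a measure-preservation statement for factoring forms that handles the unit-scaling ambiguity. The first genuine gap is \eqref{betatoalpha:mom}: you describe the substitution $x\mapsto px$ and say that pinning down the correction terms ``is the crux'' --- but that bookkeeping \emph{is} the proof. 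The paper's Lemma~\ref{lem:btoa} carries it out: for $f\in P_{x^n}$, the locus where $p^r\,\|\,f(px)$ and $p^{-r}f(px)$ has reduced degree $s$ has density $p^{-\binom{r+1}{2}}p^s(p-1)$ for $0\le s<r<n$ (and $p^{-\binom{n}{2}}$ for $s=r=n$), and conditionally the root count is $\alpha_s$-distributed; this gives \eqref{betasfromalphas}, and differencing twice in $n$ yields exactly \eqref{betatoalpha:mom}. Without this, (b) --- and hence your proof of (a) --- is incomplete.

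The second and more serious gap is part (c). Your plan to prove that $\genB_d$ is a polynomial of degree $\le 2d$ by running the recursion $L\,\genB_d=(\text{lower terms})$ is precisely what the authors state does not seem to work: writing $\genB_d(t)=\sum_n b_nt^n$, the coefficient recursion for $n$ beyond $2d$ propagates $b_{n-1}$ into $b_n$ with the generically nonzero factor $p^{1-\binom{n}{2}}-p^{1-n}$, so vanishing of all coefficients past degree $2d$ would require cancellations the recursion itself does not supply; the single ``resonance'' at $n=3$ only disposes of $d=1$. The paper instead proves stabilization of $\alpha(n,d)$ for $n\ge 2d$ by a different mechanism: expressing $\alpha(n,d)$ and its non-monic analogue $\widetilde\alpha(n,d)$ as double integrals of $|\Res(g,h)|$ over split monic $g$ of degree $d$ and $h$ of degree $n-d$ (Jacobian $=$ resultant, Lemma~\ref{lem:jac}), then dividing $h=qg+r$ to reduce both to $\int\int|\Res(g,r)|$, independent of $n$ (Lemma~\ref{indep-lem1}). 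Moreover, your claim that $\genR_d(1)=\genR_d(1/p)$ is ``the $t=1$ content of (a)'' is not correct: \eqref{fe:rho} is equivalent to the cross-parameter identity $\genR_d(t;1/p)=\genR_d(t/p;p)$, whose specializations give $\genR_d(1;1/p)=\genR_d(1/p;p)$ and $\genR_d(1;p)=\genR_d(p;1/p)$, never the same-$p$ identity you need. That identity is obtained in the paper from $\genA_d(1)=\genA_d(p)$, which comes out of the conditioning identity \eqref{a:cond} for $\widetilde\alpha$ combined with $\widetilde\alpha(n,d)=\alpha(n,d)$ for $n\ge 2d$, and then passes to $\genB_d(1)=\genB_d(1/p)$ and $\genR_d(1)=\genR_d(1/p)$ via \eqref{alphatobeta:mom} and \eqref{rho:mom}. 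Without it, polynomiality alone only gives $\rho(n,d)=\bigl(p^{n+1}\genR_d(1/p)-\genR_d(1)\bigr)/(p^{n+1}-1)$ for $n\ge 2d$, which is not independent of $n$; so both the polynomiality claim and the $n\ge 2d-1$ stabilization for $\rho$ are missing from your argument.
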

\noindent
We observe that  $\genA_d$ and $\genB_d$ (for $d = 0,1,2, \ldots$)
are the unique power series satisfying
the relations 
\eqref{alphatobeta:mom} and \eqref{betatoalpha:mom}
together with the 
requirements that $\genA_d$ and $\genB_d$ are $O(t^d)$, $\genA_0 = \genB_0 = 1$
and $\genA_1$ and $\genB_1$ are $t + O(t^2)$. This last requirement is needed,
since otherwise we could replace $\genA_d$ and $\genB_d$
by $\lambda^d \genA_d$ and $\lambda^d \genB_d$ where $\lambda$ is a constant.
This uniqueness statement is easily proved by induction on $d$ and $n$.
The power series $\genR_d$ are then uniquely determined by~\eqref{rho:mom}.

While  we have stated all our results above 
in terms of the ring $\Zp$, the generalisation to any complete discrete valuation ring with 
finite residue field (as considered in \cite{Buhler-et-al}) is immediate.

\subsection{Relation to previous work}
\label{subsec:previous}
The study of the distribution of the number of zeros of random polynomials has a long and interesting history.  Over the real numbers, the study goes back to at least Bloch and P\'olya~\cite{BP}, who proved asymptotic bounds on the expected number of real zeros of  polynomials of degree~$n$ that have coefficients independently and uniformly distributed in $\{-1,0,1\}$.  Further significant advances on the problem were made by Littlewood and Offord~\cite{LO,LO2,LO3} for various other distributions on the coefficients. 

An exact formula for the expected number of real zeros of a random
degree $n$ polynomial over~$\R$---whose coefficients are each
identically, independently, and normally distributed with mean
zero---was first determined in the landmark 1943 work of
Kac~\cite{Kac}, which  influenced much of the extensive work to
follow. In particular, in~1974, Maslova~\cite{Maslova,Maslova2}
determined asymptotically all higher moments for the number of zeros
of a random real Kac polynomial in the limit as the degree $n$ tends
to infinity.  For excellent surveys of the literature and further
related results and references regarding the number of real zeros of
random real polynomials, see the works of Dembo, Poonen, Shao, and
Zeitouni~\cite[\S1.1]{DPSZ}  and of Nguyen and Vu~\cite[\S1]{NV}.

The corresponding problems and methods over $p$-adic fields were first
considered by Evans~\cite{Evans}, who determined, for suitably random
families of $d$ polynomials in $d$ variables over $\Z_p$, the expected
number of common zeros in $\Z_p^d$.
%The first paper to consider such problems and methods over $p$-adic
%rings and fields was Evans~\cite{Evans}, who determined
In the case $d=1$, these results were taken further by Buhler,
Goldstein, Moews, and Rosenberg~\cite{Buhler-et-al},
Caruso~\cite{Caruso}, Limmer~\cite{Limmer},
Shmueli~\cite{Shmueli2021}, and Weiss~\cite{Weiss2013}. These papers
were concerned primarily with determining the expected number of roots
for polynomials of degree $n$ over the $p$-adics, the $n$-th factorial
moments for polynomials of degree $n$, or all moments for polynomials
of degree $n\leq 3$.

The current paper gives a method for computing all moments for the
number of zeros of random $p$-adic polynomials of degree $n$ in one
variable for any degree~$n$.  Indeed, Theorem~\ref{thm:totsplit},
together with the uniqueness statement that follows it, enables us to
explicitly compute the probabilities and moments $\rho^*(n,r)$,
$\rho(n,d)$, $\alpha(n,d)$, and $\beta(n,d)$ for any values of $n$,
$r$, and $d$.  We may similarly compute the analogues $\alpha^*(n,r)$
and $\beta^*(n,r)$ of $\rho^*(n,r)$; i.e., $\alpha^*(n,r)$
(resp.\ $\beta^*(n,r)$) denotes the probability that a random {\it
  monic} polynomial of degree $n$ (resp.\ monic polynomial reducing to
$x^n$ modulo $p$) has exactly~$r$ roots over~$\Q_p$
(equivalently,~$\Z_p$). Indeed, the formulas (\ref{prob-exp}) and
(\ref{exp-prob}) continue to hold when the symbol~$\rho$ is replaced
by~$\alpha$~(resp.~$\beta$).  In~particular, we deduce
from~(\ref{exp-prob}) that $\rho^*(n,r)$, $\alpha^*(n,r)$, and
$\beta^*(n,r)$ all~satisfy the same symmetry properties (\ref{fe:rho})
and (\ref{fe:alphabeta}) as their unstarred counterparts.

We thus recover all  previously known values of~$\rho^*$, $\alpha^*$,
$\beta^*$, $\rho$, $\alpha$, and  $\beta$, including  that
$\rho(n,1)\!=\!1$ for all~$n$ (a result independently due to
Caruso~\cite{Caruso}, and Kulkarni and Lerario~\cite{KL});
that $\alpha(n,1)=p/(p+1)$ (a result of Shmueli~\cite{Shmueli2021});
and the values of~$\rho^*(n,n)$ for all~$n$ (as determined by Buhler,
Goldstein, Moews, and Rosenberg~\cite{Buhler-et-al}).
%Our techniques are rather different from those used previously (see~\S\ref{methods}).

There remain three striking aspects of our formulas in
Theorem~\ref{thm:totsplit} that call for explanation: 1) they are all
rational functions in $p$ that are independent of $p$ and are valid for
all primes $p$ (including for small primes and primes $p\mid n$); 2) they
satisfy a symmetry $p\leftrightarrow 1/p$; and 3) they stabilize for large
$n$.

Properties 1) and 2) also occurred in earlier work of the first three
authors (see for example~\cite{BCFJK}). Property 1) may be related to the
work of Denef and Loeser~\cite{DL} (see also Pas~\cite{P}), at least for
sufficiently large $p$. Regarding Property 2), the expectations we study
may be expressed as $p$-adic integrals (see, e.g.,
Section~\ref{sec:pfthm3}), raising the interesting possibility that there
might be a common explanation for the $p \leftrightarrow 1/p$ symmetries
occurring in Theorem~\ref{thm:totsplit}~\!(a) and the functional equations
for certain zeta functions established by Denef and Meuser~\cite{DM}, who
count the number of zeros of a homogeneous polynomial mod $p^m$, and by du
Sautoy and Lubotzky~\cite{dSL}, who count finite index subgroups of a
nilpotent group. Finally, regarding Property 3), there is the interesting
possibility that the independence of $n$ established in
Theorem~\ref{thm:totsplit}~\!(c) might fit into the framework of
representation stability as initiated in the work of Church, Ellenberg and
Farb~\cite{CFB}. We believe it is an exciting problem to understand these
phenomena and their potential relations with the aforementioned works.

\subsection{Examples}

We illustrate some particularly interesting cases of Theorem~\ref{thm:totsplit}.

\subsubsection{The expected number of roots of a random $p$-adic polynomial}

By definition, the quantities $\rho(n,1)$, $\alpha(n,1)$, and $\beta(n,1)$ represent the expected number of roots over~$\Q_p$ of a random  polynomial over $\Zp$ of degree~$n$, a random monic polynomial over~$\Zp$ of degree~$n$, and~a~random monic polynomial over $\Zp$ of degree $n$ reducing to $x^n$ (mod~$p$), respectively. 

Setting $d = 1$, we compute
\[\genA_1(t) = t - \frac{1}{p+1} t^2, \quad
\genB_1(t) = t - \frac{p}{p + 1} t^2, \quad  
\genR_1(t) = (p + 1) t - p t^2.\] Therefore, 
\[ \alpha(n,1) = \left\{ \begin{array}{cl} 
1 & \text{ if } n = 1, \\[.025in]
\displaystyle \frac{p}{p+1} & \text{ if } n \ge 2, 
\end{array} \right. \quad
\beta(n,1) = \left\{ \begin{array}{cl} 
1 & \text{ if } n = 1, \\[.025in]
\displaystyle \frac1{p+1} & \text{ if } n \ge 2, 
\end{array} \right.
%\quad \rho(n,1) = 1 \;\; \mbox{for all $n \ge 1$.}
\]
and
$$\rho(n,1) = 1 \;\; \mbox{for all $n \ge 1$.}$$ This recovers, in
particular, the aforementioned results of Caruso~\cite{Caruso} and
Kulkarni and Lerario~\cite{KL} on the values of $\rho(n,1)$, and of
Shmueli~\cite{Shmueli2021} on $\alpha(n,1)$, who obtained
them via quite different methods (though their methods are related to
those used by Kac~\cite{Kac} cited above).

\subsubsection{The second moment of the number of $\Q_p$-roots of a random $p$-adic polynomial}

Next, we determine the expected number of 2-sets (i.e., unordered pairs) of $\Q_p$-roots of  a polynomial over $\Zp$ of degree $n$.
Setting $d=2$, we compute
\begin{align*}
 2 \genA_2(t) &= (p/(p + 1)) t^2
       - p (p + 1) (2 p^3 + p + 1) \eta t^3
       + p^4 \eta t^4, \\[.03in]
 2 \genB_2(t) &= (1/(p + 1)) t^2
      - p (p + 1) (p^3 + p^2 + 2) \eta t^3
      + p^2 \eta t^4, \\[.03in]
 2 \genR_2(t) &= (p^2 + p + 1) t^2
      - p (p + 1)^3 (2 p^4 + 3 p^2 + 2) \eta t^3
      + p^2 (p + 1)^2 (p^4 + p^2 + 1) \eta t^4,
\end{align*}
where $\eta = 1/((p + 1)^2(p^4+p^3+p^2+p + 1))$. Therefore, 
\[ 2\alpha(n,2) = \left\{ \begin{array}{ll} 
p/(p+1) & \text{ if } n = 2, \\ p^3 (p^3 + 1) \eta & \text{ if } n = 3, \\
p^3 (p^3 + p + 1) \eta & \text{ if } n \ge 4, 
 \end{array} \right. \qquad
2\beta(n,2) = \left\{ \begin{array}{ll} 
1/(p+1) & \text{ if } n = 2, \\
(p^3 + 1) \eta & \text{ if } n = 3, \\
(p^3 + p^2 + 1) \eta & \text{ if } n \ge 4,
 \end{array} \right. 
\]
and 
$$ \rho(2,2)=1/2,\quad
2\rho(n,2) = (p^2 + 1)^2/(p^4+p^3+p^2+p+1) \;\;\mbox{for all $n \ge 3$.}$$ 

\noindent There is no difficulty in extending these calculations to larger values of $d$.

\subsubsection{The density of $p$-adic polynomials of degree $n$ having $r$ roots} 
\label{sec:listanswers}

Once we have computed the expectations $\rho(n,d)$, $\alpha(n,d)$, and $\beta(n,d)$, we may 
use~\eqref{exp-prob} and its analogues for $\alpha$ and $\beta$ to compute the probabilities $\rho^*(n,r)$,
$\alpha^*(n,r)$, and
$\beta^*(n,r)$.
Since the probability of a repeated root is zero, we always
have 
$\rho^*(n,n-1) = \alpha^*(n,n-1)=
\beta^*(n,n-1)=0$.

For $n=2$ and $3$, the probabilities $\rho^*(n,r)$  can already be deduced from 
results in~\cite{BCFJK}, \cite{Buhler-et-al} and~\cite{Caruso}. 
Namely, we have $$\rho^*(2,0) = \rho^*(2,2) = 1/2,$$  
and
$$\rho^*(3,0) = 2 \gamma,\;\, \rho^*(3,1) = 1 - 3 \gamma,\;\, \rho^*(3,3) = \gamma,$$
where 
\[ \gamma = \frac{(p^2 + 1)^2}{6(p^4+p^3+p^2+p+ 1)}. \]

For quartic  polynomials in  $\Zp[x]$, the probability of  having
$0,1,2$ or $4$ roots in $\Qp$ is given by
%These degree 12 polynomials  are irreducible except $\rho^*(4,1)$ which has a factor of $(p+1)^2$.
\begin{align*}
\rho^*(4,0) &= \frac{\delta}{8}(3 p^{12} +
    5 p^{11} + 8 p^{10} + 12 p^9 + 13 p^8 + 12 p^7 + 17 p^6 + 12 p^5 + 13 p^4  + 12 p^3 + 8 p^2 + 5 p + 3),\\[.03in]
\rho^*(4,1) &= \frac{\delta}{3} (p^{12} + 2 p^{11}
   + 4 p^{10} + 3 p^9 + 6 p^8 + 7 p^7 + 2 p^6 + 7 p^5 + 6 p^4 + 3 p^3  + 4 p^2 + 2 p + 1),\\[.03in]
\rho^*(4,2) &= \frac{\delta}{4} (p^{12} + 3 p^{11}
    + 2 p^{10} + 6 p^9 + 5 p^8 + 4 p^7 + 9 p^6 + 4 p^5 + 5 p^4 + 6 p^3  + 2 p^2 + 3 p + 1),    \\[.03in]
\rho^*(4,4) &= \frac{\delta}{24} ( p^{12} - p^{11} +
    4 p^{10} + 3 p^8 + 4 p^7 - p^6 + 4 p^5 + 3 p^4 + 4 p^2 - p + 1),  
\end{align*}
where 
\[ \delta = \frac{(p-1)^2}{(p^5-1)(p^9-1)}.\]
\noindent
The last of these probabilities, $\rho^*(4,4)$, was determined in \cite{Buhler-et-al}, where
it is denoted $r_4^{\operatorname{nm}}$. As predicted by Theorem~\ref{thm:totsplit}(a),
the sequence of coefficients in each numerator and in each denominator is palindromic.
Again, there is no difficulty in computing $\rho^*(n,r)$ for larger 
values of $n$. 

For $n=2$ and $3$, the probabilities $\alpha^*(n,r)$ 
were computed by Limmer~\cite[p.~27]{Limmer} and Weiss~\cite[Theorem 5.3]{Weiss2013},
who only considered primes $p > n$. Our work shows that the same formulas hold for all primes~$p$.
Namely, we have
\begin{align*}
\alpha^*(2,0) &= \frac12\,\frac{p + 2}{\hspace{.01in}p + 1},
\!\!\!\!\qquad
\alpha^*(2,2) = \frac12\, \frac{p}{p + 1};\\[.055in]
 \alpha^*(3,0) &= \frac{1}{3}\,\frac{p^4 + p^3 + 3p^2 + 3}{p^4+p^3+p^2+p+1},\\[.0125in]
    \alpha^*(3,1) &= \frac{1}{2}\,\frac{p^5 + 3p^4 + p^3 + 2p^2 + 2p}{(p+1)(p^4+p^3+p^2+p+1)},\\[.0125in] 
    \alpha^*(3,3) &= \frac{1}{6}\,\frac{p^5 - p^4 + p^3}{(p+1)(p^4+p^3+p^2+p+1)}.
\end{align*}

For monic quartic polynomials in $\Zp[x]$, the probability of  having
$0$, $1$, $2$ or $4$ roots in $\Zp$ is given by 
\begin{align*}
    \alpha^*(4,0) &= \:\frac{1}{8}\;\frac{3p^{11} + 8p^{10} + 6p^9 + 2p^8 - 3p^6 + 4p^5 - 4p^3 - 8p - 8}{(p+1)^2(p^9-1)},\\[.03in]
    \alpha^*(4,1) &= \:\frac{1}{3}\;\frac{p^{14} + 2p^{12} - 6p^{11} + 9p^{10} - 9p^9 + 2p^8 + 3p^7 - 2p^6 - 3p^5 + 3p^4 - 3p^2 + 3p}{(p^5-1)(p^9-1)},\\[.03in]
    \alpha^*(4,2) &= \:\frac{1}{4}\;\frac{p^{16} + 2p^{15} - 4p^{14} + 2p^{13} + 2p^{12} - 6p^{11} + 4p^{10} + 2p^9 - 6p^8 + 2p^7 + p^6 - 2p^5 + 2p^3}{(p+1)^2(p^5-1)(p^9-1)},\\[.03\in]
    \alpha^*(4,4) &= \frac{1}{24}\,\frac{p^{16} - 4p^{15} + 6p^{14} - 2p^{13} - 4p^{12} + 6p^{11} - 4p^{10} - 2p^9 + 6p^8 - 4p^7 + p^6}{(p+1)^2(p^5-1)(p^9-1)}.
\end{align*}
By the analogue of (\ref{fe:alphabeta}) for $\alpha^*$ and $\beta^*$, we may obtain the values of $\beta^*$ from those of $\alpha^*$ by substituting~$1/p$ for~$p$.
\label{starvalues}

\subsubsection{The density of $p$-adic polynomials that split completely}
\label{sec:buhler}

The quantities $\rho(n,n)$ and $\alpha(n,n)$ represent the probabilities that a
(general or monic) polynomial of degree $n$ over $\Zp$ splits completely
over $\Qp$. These probabilities were previously computed 
by Buhler, Goldstein, Moews, and Rosenberg~\cite{Buhler-et-al}. We may recover these probabilities from Theorem~\ref{thm:totsplit} as follows.  If we replace 
$\genA_d$, $\genB_d$, and $\genR_d$ by their coefficients of $t^d$ (these being
the terms of lowest degree in $t$), then Theorem~\ref{thm:totsplit}(b) 
reduces to 
\begin{align}
\label{alphatobeta:s}
\sum_{n=0}^\infty \alpha(n,n) (pt)^n &= 
   \left( \sum_{n=0}^\infty \beta(n,n) t^n \right)^p \\
\label{rho:s}
\!\!\!\!\!\!\!
\!\!\!\!\!\!\!
\!\!\!\!\!\!\!
\sum_{n=0}^\infty (p^n+p^{n-1} + \cdots + 1) \rho(n,n) t^n &=  
   \left( \sum_{n=0}^\infty \beta(n,n) t^n \right)^{p+1} \\[.1in]
\label{betatoalpha:s}
\beta(n,n) &= p^{-\binom{n}{2}} \alpha(n,n),
\end{align}
from which one can inductively compute $\rho(n,n)$,  $\alpha(n,n)$, and $\beta(n,n)$ for all $n$. In~\cite{Buhler-et-al},
Buhler \textit{et al.}~write $r_n^{\operatorname{nm}}$, $r_n$, and $p^n s_n$ for $\rho(n,n)$, $\alpha(n,n)$,  and $\beta(n,n)$, respectively. Our equations~\eqref{alphatobeta:s}
and~\eqref{rho:s} appear as Equations (1-2) and (3-1) in their paper; and their Lemma~4.1(iv), which states that \linebreak $r_n(q) = r_n(1/q) q^{\binom{n}{2}}$,
follows by combining our general  Equation~\eqref{fe:alphabeta}  with~\eqref{betatoalpha:s}.
The explicit values of $\rho(n,n) = \rho^*(n,n)$, $\:\alpha(n,n)=\alpha^*(n,n)\,$, and $\beta(n,n)=\beta^*(n,n)$ for $n \le 4$ were recorded  in~\S\ref{starvalues}. 

\subsubsection{The density of $p$-adic polynomials with a root}\label{sec:originalproblem}
We may also compute $1 - \rho^*(n,0)$, the probability that a polynomial of degree $n$ over $\Zp$ has at least one root over $\Q_p$.
Indeed, as a special case of~\eqref{exp-prob}, we have
$\rho^*(n,0) = \sum_{d=0}^n (-1)^d \rho(n,d)$, and likewise for
the $\alpha$'s and $\beta$'s. In terms of generating functions, we have
\[ \genA^*(t) := (1-t) \sum_{n=0}^\infty \alpha^*(n,0) t^n 
= \sum_{d=0}^\infty (-1)^d \genA_d(t) \]
\[ \genB^*(t) := (1-t) \sum_{n=0}^\infty \beta^*(n,0) t^n 
= \sum_{d=0}^\infty (-1)^d \genB_d(t) \]
and
\[ \genR^*(t) := (1-t)(1-pt) \sum_{n=0}^\infty (p^n+p^{n-1} + \cdots + 1) 
\rho^*(n,0) t^n = \sum_{d=0}^\infty (-1)^d \genR_d(t). \]
Specialising Theorem~\ref{thm:totsplit}(b) by setting $u=-1$ gives
\begin{align}
\label{alphatobeta:noroot}
\genA^*(pt) &= \genB^*(t)^p \\
\label{rho:noroot}
\genR^*(t) &= \genA^*(pt) \genB^*(t) = \genB^*(t)^{p+1}\\
\label{betatoalpha:noroot}
\genB^*(t) - t \genB^*(t/p) &= \Phi( \genA^*(t) - t \genB^*(pt) )
\end{align}
where $\Phi$ is as before.

We may therefore  use~\eqref{alphatobeta:noroot} and~\eqref{betatoalpha:noroot}
to recursively solve for $\alpha^*(n,0)$ and $\beta^*(n,0)$, and then
compute $\rho^*(n,0)$ using~\eqref{rho:noroot}. The explicit values of $\alpha^*(n,0)$, $\beta^*(n,0)$, and $\rho^*(n,0)$ for $n\leq 4$ were recorded in~\S\ref{starvalues}. 

\subsubsection{Large $p$ limits}\label{largep}

We note that $\alpha(n,d)$, $\rho(n,d)$, $\alpha^*(n,r)$, and $\rho^*(n,r)$ are rational functions in $p$ whose numerators and denominators have the same degree. 
Hence, for fixed $n$, $d$,  and $r$, 
we may compute the limits of these functions as $p$ tends to infinity. Meanwhile,  $\beta(n,d)$ and $\beta^*(n,r) $ 
are rational functions in $p$ whose denominator has higher degree than the numerator in most cases. Thus, a correction factor of a power of $p$ is needed to make the limit finite and nonzero. We have the following proposition.

\begin{proposition} \label{prop:largep}
\phantom{hello}

\begin{itemize}\item[{\rm (a)}]  Let $0 \leq d \leq n$ 
be integers, and let $k = \min(d+1,n)$. Then 
%When $p\rightarrow \infty$, we have
\begin{equation*} 
\lim_{p\rightarrow \infty}\alpha(n,d)=\lim_{p\rightarrow \infty}\rho(n,d)=\lim_{p\rightarrow \infty}p^{\binom{k}{2}}\beta(n,d)=\dfrac{1}{d!}. \end{equation*}
\item[{\rm (b)}]  
Let $0 \leq r \leq n$ 
be integers.  Then
\[\lim_{p\rightarrow \infty}\rho^*(n,r) = \lim_{p\to\infty}\alpha^*(n,r)=\sum_{d=0}^n (-1)^{d-r} \binom{d}{r} \dfrac{1}{d!}=\frac{1}{r!} \sum_{d=0}^{n-r} (-1)^{d}\dfrac{1}{d!}.
\]
Hence, if we also let $n\to\infty$, we obtain  $$\lim_{n\rightarrow \infty}\lim_{p\rightarrow \infty}\rho^*(n,r)=\lim_{n\rightarrow \infty}\lim_{p\rightarrow \infty}\alpha^*(n,r)=\frac{1}{r!}e^{-1}.$$ 

\item[{\rm (c)}]
Finally, let $0 \leq r \leq n$ be integers, and let $k = \min(r+1,n)$.
If $r \neq n-1$ then
\begin{equation*} 
\lim_{p\rightarrow \infty}p^{\binom{k}{2}}\beta^*(n,r)=\frac{1}{r!}.
\end{equation*}
\end{itemize}
\end{proposition}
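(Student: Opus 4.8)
The plan is to establish all three parts by extracting the leading-order (in $p$) behaviour of the generating functions $\genA_d$, $\genB_d$, $\genR_d$ appearing in Theorem~\ref{thm:totsplit}, rather than working directly with the probabilities. For part~(a), I would argue by induction on $d$ using the relations \eqref{alphatobeta:mom}--\eqref{betatoalpha:mom}. The base case $d=0$ is immediate since $\genA_0 = \genB_0 = \genR_0 = 1$, giving $\alpha(n,0) = \beta(n,0) = \rho(n,0) = 1 = 1/0!$. For the inductive step, I expect the following picture: by Theorem~\ref{thm:totsplit}(c), $\genA_d$ and $\genB_d$ are polynomials of degree at most $2d$ with lowest-order term in degree $d$, and $\alpha(n,d) = \genA_d(1)$ for $n \ge 2d$ (similarly for $\beta$, $\rho$). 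So it suffices to control the coefficients of $\genA_d$, $\genB_d$, $\genR_d$ as rational functions of $p$, and then evaluate at $t=1$. The key claim to prove inductively is that, as $p\to\infty$, one has $\genA_d(t) \to t^d/d!$ coefficientwise (i.e. $\genA_d(t) = t^d/d! + O(1/p)$ as a polynomial in $t$ with coefficients in $\Q(p)$), that $p^{\binom{d}{2}}\genB_d(t) \to t^d/d!$, and that $\genR_d(t) \to t^d/d!$. Granting this claim, part~(a) follows by setting $t=1$: for $n \ge 2d$ we get the stated limits with $k = d+1$ (note $\binom{k}{2} = \binom{d+1}{2} = \binom{d}{2} + d$; I need to reconcile this with the $\genB_d$ scaling, keeping track of the extra factors of $p$ coming from the gap between the degree-$d$ coefficient of $\genB_d$ and its value at $1$), while for the small-$n$ cases $n < 2d$, i.e. $k = n$, one instead reads off $\beta(n,d)$ from lower-degree coefficients of $\genB_d(t)$ — here the operator $\Phi$, which multiplies the degree-$m$ coefficient by $p^{-\binom{m}{2}}$, is what produces the correct power $p^{\binom{n}{2}}$, since the relevant coefficient sits in degree $n$.

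To prove the coefficientwise claim, I would read the relations at leading order in $p$. From \eqref{betatoalpha:mom}, $\genB_d(t) - t\genB_d(t/p) = \Phi(\genA_d(t) - t\genA_d(pt))$: the operator $\Phi$ damps higher-degree terms by large powers of $p$, and the substitution $t \mapsto pt$ amplifies them, so after taking $\Phi$ the dominant contribution on the right comes from the lowest-degree term; tracking this shows $\genB_d$ is, to leading order, a fixed power of $p$ smaller than $\genA_d$. From \eqref{alphatobeta:mom}, writing $G(t,u) = \sum_d \genB_d(t) u^d = 1 + \genB_1(t) u + \cdots$, we have $\sum_d \genA_d(pt) u^d = G(t,u)^p$; expanding $G^p = \exp(p \log G)$ and using that $\genB_d$ for $d \ge 1$ is $O(1/p)$ (by the inductive control), the cross-terms are subleading and one gets $\genA_d(pt) \approx (p\genB_1(t))^d/d!$ to leading order, which combined with the known $\genA_1(t) = t - t^2/(p+1)$, $\genB_1(t) = t - pt^2/(p+1)$ pins down the normalisation $\genA_d(t) \to t^d/d!$. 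Part~(b) then follows directly: by \eqref{exp-prob}, $\rho^*(n,r) = \sum_{d=0}^n (-1)^{d-r}\binom{d}{r}\rho(n,d)$, and since each $\rho(n,d)$ (resp. $\alpha^*(n,r)$ via the analogue of \eqref{exp-prob} for $\alpha$) is a rational function of $p$ with numerator and denominator of equal degree, the limit passes inside the finite sum, yielding $\sum_{d=0}^n (-1)^{d-r}\binom{d}{r}/d!$; the rearrangement to $\frac{1}{r!}\sum_{d=0}^{n-r}(-1)^d/d!$ is the elementary identity $\binom{d}{r}/d! = \binom{d-r}{0}/(r!(d-r)!)$ after reindexing, and letting $n \to \infty$ gives the truncated exponential series converging to $e^{-1}/r!$. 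Part~(c) is the analogue of (b) for $\beta^*$: using the $\beta$-analogue of \eqref{exp-prob}, $\beta^*(n,r) = \sum_d (-1)^{d-r}\binom{d}{r}\beta(n,d)$, multiply by $p^{\binom{k}{2}}$ with $k = \min(r+1,n)$; by part~(a) the term $d$ in this sum contributes, after scaling, a factor $p^{\binom{k}{2} - \binom{k_d}{2}}$ where $k_d = \min(d+1,n)$, and for $d > r$ this exponent is strictly negative (when $r \ne n-1$, so that $k = r+1 \le n-1$ and the relevant $\binom{k_d}{2}$ genuinely grows), so only the $d=r$ term survives in the limit, giving $\binom{r}{r}\cdot\frac{1}{r!} = \frac{1}{r!}$; the hypothesis $r \ne n-1$ is exactly what guarantees the $d = r+1$ term is properly damped rather than tying.

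The main obstacle will be the bookkeeping of powers of $p$ in the inductive claim for $\genB_d$ — in particular, correctly identifying that the leading power is $p^{-\binom{d}{2}}$ and verifying this is consistent across all three relations \eqref{alphatobeta:mom}, \eqref{rho:mom}, \eqref{betatoalpha:mom} simultaneously, since $\Phi$ introduces triangular-number exponents that must match up with the $\min(d+1,n)$ truncation phenomenon in part~(a). I also expect some care is needed at the boundary cases $n < 2d$ (equivalently $n < 2r$ in part~(c)), where the generating-function values at $t=1$ are no longer the relevant quantities and one must instead extract individual coefficients; here the fact (Theorem~\ref{thm:totsplit}(c)) that $\genA_d$, $\genB_d$, $\genR_d$ are genuine polynomials, not just power series, is essential, and the exceptional excluded case $r = n-1$ (where $\beta^*(n,n-1) = 0$ identically, so no normalising power of $p$ makes sense) falls out naturally.
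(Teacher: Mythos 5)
Your overall architecture---deduce (b) and (c) from (a) via the inversion formula \eqref{exp-prob} and its analogues for $\alpha^*$ and $\beta^*$---is sound and matches the paper, and those deductions go through once (a) is known. But your route to (a) through leading-order coefficientwise asymptotics of $\genA_d$, $\genB_d$, $\genR_d$ contains a genuine gap, and two of your three key claims are false as stated. For $\genB_d$: already at $d=1$ one has $\genB_1(t)=t-\frac{p}{p+1}t^2\to t-t^2$, not $t$; in general the correct leading behaviour is $p^{\binom{d}{2}}\genB_d(t)\to (t^d-t^{d+1})/d!$ (e.g.\ $p\,\genB_2\to (t^2-t^3)/2$ from the displayed formulas). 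For $\genR_d$: its coefficients diverge with $p$ (e.g.\ $\genR_1=(p+1)t-pt^2$ and $[t^2]\,2\genR_2=p^2+p+1$), so $\genR_d\not\to t^d/d!$ coefficientwise in any sense. (Two smaller slips: your inductive input ``$\genB_d$ is $O(1/p)$ for $d\ge 1$'' fails at $d=1$, where $\genB_1$ has coefficients of size $1$; and $k=n$ corresponds to $d\ge n-1$, not to $n<2d$.)

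More importantly, even with the corrected leading-order statements, the step ``set $t=1$'' cannot yield the limits for $\beta$ and $\rho$, because the leading terms cancel at $t=1$: for $n\ge 2d$ one has $\beta(n,d)=\genB_d(1)\sim p^{-\binom{d+1}{2}}/d!$, a factor $p^{d}$ smaller than the individual coefficients of $\genB_d$ (which are of size $p^{-\binom{d}{2}}$), and likewise $\rho(n,d)=\genR_d(1)\to 1/d!$ while the coefficients of $\genR_d$ grow like $p^{d}$. Hence knowing each coefficient only to leading order in $p$ determines neither limit; you would need every coefficient to relative precision $O(p^{-d})$, which your leading-order induction does not supply. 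This is precisely the ``reconciliation'' you flag for $\genB_d$ but do not resolve, and the same obstruction occurs, unflagged, for $\genR_d$. The paper avoids the generating functions here altogether: it obtains $\lim_{p\to\infty}\alpha(n,d)=1/d!$ from the $p\to\infty$ factorization statistics of monic polynomials over $\Fp$ (Lemma~\ref{lemma:asymptoticfactorization}: splitting types with repeated factors have density $\to 0$, simple roots lift by Hensel, and the weighted fixed-point count is $\binom{n}{d}(n-d)!=n!/d!$); it gets $\rho$ from \eqref{getrho-new}, where the reduced-degree-$n$ term dominates and $\rho(n,d,n)=\alpha(n,d)$; and it gets $\beta$, including the correct exponent $\binom{\min(d+1,n)}{2}$, from the exact relation \eqref{betasfromalphas}. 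If you wish to stay within the generating-function framework, you would need either exact formulas or a full next-to-leading-order expansion; as written, part (a) for $\beta$ and $\rho$ is not established.
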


\vspace{.075in}
We prove these claims in Section~\ref{section4}.

\subsection{A general conjecture}

Theorem~\ref{thm:totsplit}(a)  naturally leads us to formulate a much more general conjecture. Namely, we conjecture that the density of polynomials of degree $n$ over $\Z_p$ cutting out \'etale extensions of $\Q_p$ of degree $n$ in which $p$ has {\it any} given splitting type is a rational function of $p$ satisfying the identities (\ref{fe:rho}) and~(\ref{fe:alphabeta}). 

Recall that a {\em splitting type of degree $n$} is a tuple $\sigma =
(d_1^{e_1}\,d_2^{e_2}\,\cdots\,d_t^{e_t})$, where the $d_j$ and $e_j$
are positive integers satisfying $\sum d_j e_j=n$.  We allow repeats
in the list of symbols~$d_j^{e_j}$, but the order in which they appear
does not matter. To make it clear when two splitting types are
the same, we could for example order the pairs $(d_j,e_j)$
lexicographically. Exponents~$e_j=1$ may be omitted.  

For an \'etale extension $K/\Q_p$ of degree~$n$, we define the symbol $(K,p)$ to be the splitting type 
$\sigma =
(d_1^{e_1}\,d_2^{e_2}\,\cdots\,d_t^{e_t})$ if $p$ factors in $K$ as 
$P_1^{e_1}P_2^{e_2}\cdots P_t^{e_t}$, where $P_1,P_2,\ldots,P_t$ are primes in $K$ having residue field degrees $d_1,d_2,\ldots,d_t$, respectively. We say that {\it $p$ has splitting type $\sigma$ in $K$} if $(K,p)=\sigma$.

We then make the following conjecture.

\begin{conjecture}
\label{genconj}
Let $\sigma$ be any splitting type of degree $n$, and  set 
\begin{align*}
\rho(n,\sigma;p) &:= \mbox{\rm density of polynomials  $f\in\Z_p[x]$ of degree $n$}\\
&\phantom{=}\;\;\mbox{ \rm such that $K\!:=\!\Q_p[x]/f(x)$ is \'etale over $\Q_p$ 
and 
$(K,p)=\sigma$, 
}\\[.1in]
\alpha(n,\sigma;p) &:=\mbox{\rm density of monic polynomials $f\in\Zp[x]$ of degree $n$}\\
&\phantom{=}\;\;\mbox{ \rm such that $K\!:=\!\Q_p[x]/f(x)$ is \'etale over $\Q_p$ 
and 
$(K,p)=\sigma$,
}\\[.1in]
\beta(n,\sigma;p) &:= \mbox{\rm density of monic polynomials $f\in\Zp[x]$ of degree $n$ with 
$f(x)\equiv x^n$ (mod $p$)}\\
&\phantom{=}\;\;\mbox{ \rm such that $K\!:=\!\Q_p[x]/f(x)$ is \'etale over $\Q_p$ 
and 
$(K,p)=\sigma$.
}
\end{align*}
Then $\rho(n,\sigma;p)$, $\alpha(n,\sigma;p)$, and $\beta(n,\sigma;p)$ are rational functions of $p$ and satisfy the identities:
\begin{align}
\label{fe:rho2}
  \rho(n,\sigma;p) &= \rho(n,\sigma;1/p);  \\
  \label{fe:alphabeta2}
 \alpha(n,\sigma;p)&=\beta(n,\sigma;1/p).
\end{align}
\end{conjecture}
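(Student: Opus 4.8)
The plan is to extend the recursive machinery behind Theorem~\ref{thm:totsplit} so that it transports the \emph{entire} splitting type $\sigma$, not merely the count of degree-one unramified factors. Following the remark after Theorem~\ref{thm:totsplit}, I would prove the three assertions simultaneously and uniformly over all complete discrete valuation rings $R$ with finite residue field $\F_q$ at once: the target is that $\rho(n,\sigma)$, $\alpha(n,\sigma)$, $\beta(n,\sigma)$ depend only on $q$, are rational in $q$, and satisfy $\rho(n,\sigma;q)=\rho(n,\sigma;1/q)$ and $\alpha(n,\sigma;q)=\beta(n,\sigma;1/q)$; the case $R=\Zp$, $q=p$ is then the conjecture. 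The whole argument is an induction on $n$ assembled from three reductions, each a $\sigma$-refinement of one of the identities in Theorem~\ref{thm:totsplit}(b); the class of base rings is closed under the field extensions that arise, so the induction stays inside it.

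First reduction ($\alpha\to\beta$). Condition on the factorisation $\fbar=\prod_i\gbar_i^{m_i}$ of the reduction into distinct monic irreducibles. Hensel's lemma factors $f$ accordingly, so the algebra $\Q_p[x]/f$ splits as a product over the blocks and the splitting-type density is multiplicative over them. A block with $m_i=1$ and $\deg\gbar_i=d$ contributes the symbol $(d)$ and occurs with frequency (number of monic irreducibles of degree $d$ over $\F_q$)$/q^{d}$, a rational function of $q$. A block with $m_i=m\ge2$ and $\deg\gbar_i=d$ becomes, after passing to the unramified extension with residue field $\F_{q^{d}}$ and translating by a root of a lift of $\gbar_i$, a monic polynomial of degree $m$ reducing to $x^{m}$ --- a $\beta$-configuration over $\F_{q^{d}}$ --- and its splitting type over $R$ is obtained from the one over the unramified extension by multiplying every residue degree by $d$. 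Thus $\alpha(n,\sigma;q)$ is a finite $q$-rational combination of products of terms $\beta(m,\tau;q^{d})$; all of these have $m<n$ except the single-block case $d=1$, $\fbar=(x-a)^{n}$, which after a translation is $\beta(n,\,\cdot\,;q)$ itself. A parallel analysis for binary forms, with one $\beta$-cluster attached to each of the $q+1$ points of $\PP^1(\F_q)$ and an $\alpha$-type remainder, refines $\genR_d=\genA_d(pt)\genB_d(t)=\genB_d(t)^{p+1}$ and reduces $\rho(n,\sigma)$ to $\alpha$- and $\beta$-data.

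Second reduction ($\beta$, and closing the loop at level $n$). For $f$ monic with $\fbar=x^{n}$ all Newton slopes are positive; grouping roots by slope factors $f$ into pairwise coprime pieces, so the splitting type concatenates over slopes, and a segment of slope $a/b$ in lowest terms is analysed through its residual (associated) polynomial over $\F_q$ --- one step of a Montes--Ore type algorithm --- which either splits off genuine primes of prescribed residue degree and ramification $b$, or exhibits a smaller cluster to recurse into; a change of variables $x\mapsto\pi^{c}y$ and, when $b>1$, descent along a totally ramified extension of degree $b$ (residue field still $\F_q$) effect each step with a change of measure that is a monomial in $q$. The only reductions that do not lower the degree are $\alpha(n,\,\cdot\,)\to\beta(n,\,\cdot\,)$ above and $\beta(n,\,\cdot\,)\to\alpha(n,\,\cdot\,)$ (the case of a single integer slope $a$, summed over $a\ge1$ as a geometric series); together these give a $2\times2$ linear system over $\Q(q)$ for $\alpha(n,\sigma;q)$ and $\beta(n,\sigma;q)$ with all other entries coming from strictly lower degree, exactly as the generating functions in $t$ of Theorem~\ref{thm:totsplit} encode such a coupled recursion. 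One checks the system is nonsingular, and rationality in $q$ follows by induction on $n$.

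The symmetry $q\leftrightarrow 1/q$ is the step I expect to be hard. As in Theorem~\ref{thm:totsplit} it is not visible termwise: there it emerges only after solving the functional equations \eqref{alphatobeta:mom}--\eqref{betatoalpha:mom}, the last of which couples $\alpha$ and $\beta$ through the rescaling operator $\Phi$ that multiplies the coefficient of $t^{n}$ by $q^{-\binom{n}{2}}$. I would try to promote \eqref{betatoalpha:mom} to a $\sigma$-indexed functional equation whose right-hand side bundles, over all decompositions of $\sigma$ built from the two splitting-type operations above --- multiply residue degrees by $d$ (from unramified extensions) and multiply ramification indices by $b$ (from totally ramified ones) --- the contributions produced by the Hensel and Newton-polygon reductions, and then verify that the resulting system is invariant under $q\mapsto1/q$ combined with the swap $\alpha\leftrightarrow\beta$. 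Checking that a suitably refined $\Phi$ and the combinatorics of those two operations are compatible with $q\mapsto1/q$ is the crux; a structural explanation --- e.g.\ via Denef--Meuser-type functional equations for the $p$-adic integrals that compute these densities, as the introduction speculates --- would be far preferable to the direct bookkeeping but is not at hand, which is presumably why the statement is recorded only as a conjecture.
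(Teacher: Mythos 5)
The statement you are addressing is Conjecture~\ref{genconj}, which the paper does not prove: it is recorded precisely because the authors have no argument for general $n$ and $\sigma$ (they verify it only for $n=2,3$ by direct computation, and note that Theorem~\ref{thm:totsplit}(a) is the special case obtained by summing over splitting types with a given number of $1$'s). Your proposal is likewise not a proof, and you say so yourself: the $q\leftrightarrow 1/q$ symmetries \eqref{fe:rho2}--\eqref{fe:alphabeta2}, which are the whole content of the conjecture beyond rationality, are deferred to a hoped-for ``$\sigma$-indexed'' analogue of \eqref{betatoalpha:mom} whose existence and compatibility with $q\mapsto 1/q$ you do not establish. In the proof of Theorem~\ref{thm:totsplit} the symmetry is only obtained because the coupled system \eqref{alphatobeta:mom}--\eqref{betatoalpha:mom} determines the quantities uniquely and is itself invariant under $p\leftrightarrow 1/p$ together with $\genA_d\leftrightarrow\genB_d$; you have not produced the refined system, so the crux is simply missing rather than reduced to a checkable computation.

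There is also a concrete gap in the part you present as routine, namely the rationality and uniformity in $p$ of the $\beta(n,\sigma)$ recursion. The Hensel/unramified reductions in your first step are fine (they refine Lemmas~\ref{lem:exps} and~\ref{lem:alphasfrombetas}), but the second step is not: once $\fbar=x^n$ and a Newton-polygon segment of slope $a/b$ has a residual polynomial with repeated factors, the deeper Montes-type recursion produces ramified extensions, and when $p$ divides the ramification indices (wild ramification) the count and distribution of the resulting local extensions genuinely depends on $p$ and not merely on $q$ as a formal variable --- compare Serre's mass formula or Krasner's count of totally ramified extensions, where powers of $p$ tied to wildness appear. Your assertion that each step changes the measure by a monomial in $q$ and transports splitting types by the two clean operations (multiply residue degrees by $d$, multiply ramification indices by $b$) is exactly what fails to be obvious at wild primes; the striking feature of the conjecture, emphasized in the paper (``these formulas hold for all $p$, including $p=2$ and $p=3$''), is that the final densities are nevertheless uniform. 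For the quantities in Theorem~\ref{thm:totsplit} the paper sidesteps this entirely, because counting $\Qp$-roots never requires analysing ramified factors; your program must confront it head-on, and as written it does not. So the proposal is a reasonable research plan, but both the symmetry and the wild-ramification uniformity remain unproven, which is consistent with the statement being a conjecture.
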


\vspace{.025in}
\noindent
We have proven 
that Conjecture~\ref{genconj} holds in the quadratic and cubic cases. For example, 
\begin{align*}
\rho(2,(11);p)    &= 1/2\\[.02in]
\rho(2,(2);p)     &= 1/2 - p/(p^2 + p + 1)\\[.02in]
\rho(2,(1^2);p)   &= p/(p^2 + p + 1)\\[.1in]
\rho(3,(111);p)   &= (1/6) (p^4 + 2p^2 + 1)/(p^4 + p^3 + p^2 + p + 1)\\[.02in]
\rho(3,(12);p)    &= (1/2) (p^4+1)/(p^4 + p^3 + p^2 + p + 1)\\[.02in]
\rho(3,(3);p)     &= (1/3) (p^4 - p^2 + 1)/(p^4 + p^3 + p^2 + p + 1)\\[.02in]
\rho(3,(1^2 1);p) &= (p^3 + p)/(p^4 + p^3 + p^2 + p + 1)\\[.02in]
\rho(3,(1^3);p)   &= p^2/(p^4 + p^3 + p^2 + p + 1).
\end{align*}
Note again that the numerators and denominators are all palindromic, and thus these expressions   satisfy~(\ref{fe:rho2}). Analogous formulas hold for the $\alpha$'s and $\beta$'s that satisfy (\ref{fe:alphabeta2}). 
In particular, these formulas hold for all $p$, including $p=2$ and $p=3$.

Theorem~\ref{thm:totsplit}(a) may also be viewed as a special case of Conjecture~\ref{genconj}, since the density $\rho^*(n,r;p)$ of polynomials of degree $n$ over $\Z_p$ having exactly $r$ roots over $\Q_p$ is simply the sum of the densities $\rho(n,\sigma;p)$ over all splitting types $\sigma$ having exactly $r$ 1's (and similarly for the $\alpha$'s and $\beta$'s); thus if the equalities (\ref{fe:rho2}) and (\ref{fe:alphabeta2})  hold for all $\rho(n,\sigma;p)$, then they will also hold for  
$\rho^*(n,r)$ and $\rho(n,d)$ (and similarly for the $\alpha$'s and $\beta$'s), implying Theorem~\ref{thm:totsplit}(a). 

\subsection{Methods and organization of the paper}\label{methods}

In Section~\ref{sec:prelim}, we explain some preliminaries needed for the proof of Theorem~\ref{thm:totsplit}, regarding counts of polynomials in $\F_p[x]$ having given factorization types, power series identities involving these counts, resultants of polynomials over $\Z_p$, and  explicit forms of Hensel's lemma for polynomial factorization. 

In Section~\ref{sec:proofof12}, we then turn to the proof of Theorem~\ref{thm:totsplit}. We first explain how Theorem~\ref{thm:totsplit}(b) easily implies Theorem~\ref{thm:totsplit}(a). To prove Theorem~\ref{thm:totsplit}(b), we begin by writing the $\alpha(n,d)$ in terms of the 
$\beta(n',d')$ for $n' \le n$ and $d' \le d$. This involves considering how a 
monic polynomial over $\Zp$ factors mod $p$ and showing that the random variables given by the number of $\Zp$-roots above each $\Fp$-root are independent.
The answers may be expressed in terms of the generating functions $\genA_d$ and $\genB_d$ as 
\begin{equation}
\label{AfromB}
\begin{aligned}
           \genA_1(pt) &= p \genB_1(t) \\
           \genA_2(pt) &= p \genB_2(t) + \frac{1}{2} p(p-1) \genB_1(t)^2 \\
           \genA_3(pt) &= p \genB_3(t) + p (p-1) \genB_1(t) \genB_2(t) + \frac{1}{6} p(p-1)(p-2) \genB_1(t)^3 \\
             \quad &\;\;\vdots \hspace{10em}  
\end{aligned}
\end{equation}
which may be expressed more succinctly in the form~\eqref{alphatobeta:mom}.
We then explain how to write the $\beta(n,d)$ in terms of the
$\alpha(n',d)$ for $n' \le n$. This is proved by making substitutions of the form
$x \leftarrow px$, and analysing the  valuations of the resulting  coefficients; the relation we obtain is expressed succinctly in the form~\eqref{betatoalpha:mom}.
These two types of relations allow us then to recursively solve for the $\alpha$'s and $\beta$'s.
We then write the $\rho$'s in terms of the $\alpha$'s and $\beta$'s,  using another related independence result, and 
the relations we thereby obtain are expressed succinctly in the form~\eqref{rho:mom}, completing the proof of Theorem~\ref{thm:totsplit}(b). 

As previously noted, Theorem~\ref{thm:totsplit}(b) gives
a way to compute  the power series 
$\genA_d$, $\genB_d$ and~$\genR_d$ for each $d$. However, it does not seem to give any way of showing that 
these are in fact  polynomials for all~$d$. In establishing  Theorem~\ref{thm:totsplit}(c), we thus use a different technique to prove 
the stabilisation result for the~$\alpha$'s, or equivalently, that $\genA_d$ is a polynomial of
degree at most $2d$. We could also give a similar proof of the corresponding result for the
$\beta$'s, but there is no need, since it follows from that for the $\alpha$'s, 
using either \eqref{fe:alphabeta} or \eqref{AfromB}.
 
Once we have shown  that $\genA_d$ and $\genB_d$ are polynomials of degree at most $2d$, the
same result for $\genR_d$ then follows by~\eqref{rho:mom}. 
This is not sufficient to prove the stabilisation
result for the $\rho$'s, since the definition of $\genR_d$ involves additional factors.
However, a variant of the ideas used to show that $\genA_d$ is a polynomial also show that 
$\genA_d(1) = \genA_d(p)$, and from this we deduce the stabilisation result for the~$\rho$'s.

Finally, in Section~\ref{section4}, we prove the asymptotic results contained in \S\ref{largep}.

\section{Preliminaries}
\label{sec:prelim}

\subsection{Basic notation}
For a ring $R$, let $R[x]$ denote the ring of univariate polynomials
over~$R$, and for $n\ge0$, let $R[x]_n$ denote the subset of
polynomials of  degree at most~$n$, and  
$R[x]_n^1$ the subset of monic polynomials of degree~$n$.

In the case  $R=\Z_p$, we identify $\Zp[x]_n^1$ with $\Zp^n$ via $$x^n+\sum_{i=0}^{n-1}a_ix^i
\leftrightarrow (a_0,a_1,\dots,a_{n-1}),$$ and thereby use the usual $p$-adic measure on subsets of~$\Zp[x]_n^1$ inherited via this identification.

For $f\in\Zp[x]$, we denote by $\overline{f}$ its image under
reduction modulo~$p$ in $\Fp[x]$.  A polynomial with coefficients in~$\Zp$ is \emph{primitive} if not all
its coefficients are divisible by~$p$, that is, if $\fbar\not=0$.  For a primitive polynomial~$f\in\Z_p[x]$, we define the \emph{reduced degree} of~$f$ to be
$\deg(\fbar)$.  Hence $\deg(\fbar)\le\deg(f)$, with equality if
and only if the leading coefficient of~$f$ is a unit.

\subsection{Counts involving splitting types of polynomials over \texorpdfstring{$\F_p$}{Fp}}\label{sec:patterns}

We will require  expressions for the number of monic polynomials 
in $\F_p[x]$ that factor as a product of irreducible polynomials with given 
degrees and multiplicities. These counts, and the corresponding 
probabilities for a random polynomial to have given
factorization types, are  collected in this subsection.

To this end, let 
$\stypes(n)$ denote the set of all splitting types of degree $n$.
Thus, for example, $\stypes(2) = \{(1\,1), (1^2), (2)\}$ has three elements,
$\stypes(3)$ has five elements, and $\stypes(4)$ has~$11$.

We say that a monic  polynomial $f$ in $\F_p[x]$ of degree $n$ has {\it splitting type}  
$(d_1^{e_1}\,d_2^{e_2}\,\cdots\,d_t^{e_t})\in \mathcal S(n)$ if it factors 
as $f(x) = \prod_{j=1}^t f_j(x)^{e_j}$, where the
$f_j$ are distinct irreducible monic polynomials  over~$\F_p$ with $\deg(f_j) = d_j$.
We write $\sigma(f)$ for the splitting type of $f$, and 
$N_\sigma$ for the number of monic polynomials in $\F_p[x]$
with splitting type $\sigma$.

If $\sigma = (d)$,  then we simply write $N_d$ for $N_\sigma$. That is, $N_d$ is 
the number of degree $d$ irreducible monic polynomials in $\F_p[x]$.
Writing $\mu$ for the M\"obius function, it is well known that
\[ N_d = \frac{1}{d} \sum_{k|d} \mu(k) p^{d/k}. \]
In general, for $\sigma = (d_1^{e_1}\,d_2^{e_2}\,\cdots\,d_t^{e_t})
\in \stypes(n)$, we have 
\begin{equation}\label{numberofpolynomialsoffixedsplittingtype}
N_{\sigma} = \prod_{d=1}^n \binom{N_d}{m_d}
\binom{m_d}{m_{d1}\ m_{d2}\ \cdots\ m_{dn}},
\end{equation}
where  
\[ m_{de} = m_{de}(\sigma) := \# \{ s : d_s^{e_s} = d^e \}, \]
and
\[ m_d = m_{d}(\sigma) := \# \{ s : d_s = d \} = \sum_{e=1}^n m_{de}. \]
Since there are~$p^n$ monic polynomials of degree~$n$ in $\F_p[x]$,
the probability that a degree $n$ monic polynomial
$f \in \F_p[x]$ has splitting type~$\sigma$, for $\sigma\in \stypes(n)$, is $N_{\sigma}/p^n$.
This is evidently a rational function of $p$.

\subsection{Power series identities involving \texorpdfstring{$N_\sigma$}{Nσ}}

We now establish some power series identities involving the 
counts $N_\sigma$ defined in the previous section.

Let $x_{de}$ for~$d,e\ge1$ be indeterminates. For a splitting type $\sigma\in\mathcal S(n)$
of degree $n$, let $$x_{\sigma}=\prod_{d^e\in\sigma}x_{de}.$$
Polynomials in the $x_{de}$
will be weighted by setting $\wt(x_{de})=de$.  
We set $y_0 = 1$,  and for $n \ge 1$ define
\[
y_n = \sum_{\sigma\in\stypes(n)}N_{\sigma}x_{\sigma},
\]
so that every monomial in~$y_n$ has weight~$n$. We
set $x_{d0} = 1$ for all $d \ge 1$.
\begin{proposition}
  \label{prop:general-identity}
 We have the following identity in $\Z[\{ x_{de}\}_{d,e\geq 1}][[t]]$:
\begin{equation}
  \label{eqn:general-identity}
\sum_{n=0}^\infty y_nt^n = \prod_{d=1}^\infty 
  \left(\sum_{e=0}^\infty x_{de}t^{de}\right)^{N_d}.
\end{equation}
\end{proposition}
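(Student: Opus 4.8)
The plan is to prove the identity \eqref{eqn:general-identity} by recognising both sides as generating functions that enumerate monic polynomials over $\F_p$ according to their full factorisation into irreducibles, weighted by the monomials $x_\sigma$. The key structural fact is that every monic polynomial $f\in\F_p[x]$ factors uniquely as a product $\prod_g g^{e_g}$ over monic irreducibles $g$, with almost all $e_g=0$; this is just unique factorisation in $\F_p[x]$. So the first step is to rewrite the left-hand side. By definition $y_n=\sum_{\sigma\in\stypes(n)}N_\sigma x_\sigma$, and $N_\sigma$ counts monic polynomials of splitting type $\sigma$; hence $\sum_n y_n t^n = \sum_f x_{\sigma(f)} t^{\deg f}$, the sum being over all monic $f\in\F_p[x]$, where $\sigma(f)$ is the splitting type of $f$ and $x_{\sigma(f)}=\prod_{d^e\in\sigma(f)}x_{de}$.

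Next I would expand the right-hand side as an Euler-type product. For each degree $d\ge 1$ there are $N_d$ monic irreducibles $g_1,\dots,g_{N_d}$ of that degree, and
\[
\left(\sum_{e=0}^\infty x_{de}t^{de}\right)^{N_d}
= \prod_{i=1}^{N_d}\left(\sum_{e=0}^\infty x_{de}t^{de}\right),
\]
so the whole product over $d$ becomes $\prod_{g}\bigl(\sum_{e\ge 0} x_{\deg(g),e}\,t^{e\deg(g)}\bigr)$, the product now running over all monic irreducibles $g$ of all degrees. Multiplying this infinite product out, a general term is obtained by choosing, for each $g$, an exponent $e_g\ge 0$ (all but finitely many zero), and it contributes $\bigl(\prod_g x_{\deg(g),e_g}\bigr)\, t^{\sum_g e_g\deg(g)}$. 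By unique factorisation, such choices $(e_g)_g$ are in bijection with monic polynomials $f=\prod_g g^{e_g}$, the exponent of $t$ is $\deg f$, and the coefficient $\prod_g x_{\deg(g),e_g}$ equals $\prod_{d^e\in\sigma(f)} x_{de}=x_{\sigma(f)}$ since the splitting type records exactly the multiset of pairs $(\deg g, e_g)$ with $e_g\ge 1$ (and $x_{d0}=1$ by convention, so the factors with $e_g=0$ drop out). Hence the right-hand side also equals $\sum_f x_{\sigma(f)}t^{\deg f}$, matching the left-hand side.

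The only genuinely delicate point is \emph{convergence}: the product on the right is infinite, so I must check it makes sense in $\Z[\{x_{de}\}][[t]]$ and that rearranging terms is legitimate. This is handled by the weighting: $\wt(x_{de})=de$ and each factor $\sum_{e\ge 0}x_{de}t^{de}$ is $1+$ (terms of $t$-degree $\ge d$), and moreover $1+$(terms whose monomial part has weight $\ge d$). Since for fixed $n$ only finitely many irreducibles $g$ have degree $\le n$, only finitely many factors contribute to the coefficient of $t^n$ nontrivially, so the partial products stabilise in each $t$-degree and the infinite product converges coefficientwise; the same finiteness justifies the rearrangement when multiplying out. I expect this bookkeeping — carefully stating the convergence/stabilisation in the weighted ring and the bijection between exponent tuples and polynomials — to be the main (and only real) obstacle; the algebraic content is simply unique factorisation in $\F_p[x]$ packaged as an Euler product. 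As a sanity check one can set all $x_{de}=1$, when \eqref{eqn:general-identity} reduces to $\sum_n p^n t^n = \prod_d (1-t^d)^{-N_d}$, the classical identity counting monic polynomials over $\F_p$.
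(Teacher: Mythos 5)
Your proof is correct and follows essentially the same route as the paper: expand the product, identify the coefficient of $t^n$ as a sum of weight-$n$ monomials $x_\sigma$, and check that each occurs with multiplicity $N_\sigma$ (you do this via unique factorisation into monic irreducibles, which is exactly the counting behind the paper's one-line verification). Your extra care about coefficientwise convergence of the infinite product is fine but routine, and nothing in your argument diverges from the paper's.
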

\begin{proof}
We must show that when the right hand side is multiplied out, the
coefficient of $t^n$ is $y_n$.  The coefficient of $t^n$ is a sum of
monomials in the~$x_{de}$ of weight~$n$.  Each such product has the
form $x_\sigma$ for some~$\sigma\in\stypes(n)$, and the number of times
each monomial occurs is~$N_{\sigma}$.
\end{proof}

By specializing the $x_{de}$, we obtain the following corollary.
\begin{corollary}
\label{cor:special1} We have the following identity in $\Z[[t]]$: 
\begin{equation}
  \label{eqn:identity1}
(1-pt)^{-1} = \prod_{d=1}^\infty (1-t^d)^{-N_d}.
\end{equation}
\end{corollary}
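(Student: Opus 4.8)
The plan is to derive \eqref{eqn:identity1} from Proposition~\ref{prop:general-identity} by a suitable specialization of the indeterminates $x_{de}$. Since the quantity $N_\sigma/p^n$ is the probability that a random monic degree-$n$ polynomial in $\F_p[x]$ has splitting type $\sigma$, summing over all $\sigma\in\stypes(n)$ gives $\sum_\sigma N_\sigma = p^n$. The idea is to choose the $x_{de}$ so that each $x_\sigma$ becomes $t^n$ (for $\sigma$ of degree $n$) and then read off the total count $p^n$ as the coefficient of $t^n$ on the right-hand side.

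Concretely, I would substitute $x_{de} \leftarrow t^{de}$ for all $d,e\ge 1$ into \eqref{eqn:general-identity}. Note this substitution is a harmless renaming, not a contradiction with the variable $t$ already present: more cleanly, one first sets all $x_{de}=z$ for a single new variable $z$ (or, even more to the point, one simply uses the weight grading). With $x_{de}\leftarrow 1$ for all $d,e$, the monomial $x_\sigma$ becomes $1$, and since every term of $y_n$ has weight $n$, tracking the weight via $t$ shows that the coefficient of $t^n$ in $\sum_n y_n t^n$ becomes $\sum_{\sigma\in\stypes(n)}N_\sigma = p^n$. Hence the left-hand side specializes to $\sum_{n\ge 0}p^n t^n = (1-pt)^{-1}$. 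On the right-hand side, the inner sum $\sum_{e=0}^\infty x_{de}t^{de}$ becomes $\sum_{e=0}^\infty t^{de} = (1-t^d)^{-1}$, using $x_{d0}=1$, so the product becomes $\prod_{d=1}^\infty (1-t^d)^{-N_d}$. Equating the two specialized sides yields \eqref{eqn:identity1}.

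I would then add one sentence of bookkeeping to make the specialization rigorous: the map $x_{de}\mapsto 1$ is a well-defined ring homomorphism $\Z[\{x_{de}\}]\to\Z$, which extends coefficientwise to $\Z[\{x_{de}\}][[t]]\to\Z[[t]]$; since $y_n$ is a finite sum and the infinite product in \eqref{eqn:general-identity} has only finitely many factors contributing to each fixed power of $t$ (the factor indexed by $d$ contributes trivially to $t^n$ once $d>n$), the identity \eqref{eqn:general-identity} is an honest identity in $\Z[\{x_{de}\}][[t]]$ and its image under this homomorphism is the asserted identity in $\Z[[t]]$.

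There is no real obstacle here; the only point requiring a modicum of care is the convergence of the infinite product on the right-hand side, i.e.\ that the substitution is legitimate at the level of formal power series. This is immediate from the fact that for each fixed $n$, only the factors with $d\le n$ affect the coefficient of $t^n$, and each such factor is a well-defined element of $\Z[[t]]$ after specialization. So the proof is essentially a two-line specialization plus this one-line justification.
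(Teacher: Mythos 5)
Your proof is correct and is essentially the same as the paper's: it specializes $x_{de}=1$ in Proposition~\ref{prop:general-identity}, noting $\sum_{\sigma\in\stypes(n)}N_\sigma=p^n$ so the left side becomes $(1-pt)^{-1}$ and each factor on the right becomes $(1-t^d)^{-1}$. The extra bookkeeping about the specialization homomorphism and coefficientwise convergence is fine but not needed beyond what the paper already implicitly uses.
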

\begin{proof}
In~\eqref{eqn:general-identity}, set $x_{de}=1$ for all~$d,e$. Then
$x_{\sigma}=1$, so $y_n=p^n$, and~\eqref{eqn:identity1} follows.
\end{proof}

\begin{corollary}
Let $x_e$ for $e \ge 1$ be indeterminates, and set $x_{0}=1$. Then, in $\Z[x_1,x_2,\ldots][[t]]$, we have:
\label{cor:special2}
\begin{equation}
  \label{eqn:identity2}
\sum_{n=0}^\infty \sum_{\sigma \in \stypes(n)} N_{\sigma} 
 \left(\prod_{1^e \in \sigma} x_e\right) t^n
= \left(\sum_{n=0}^\infty x_n t^n \right )^{p}
  ( 1 - t )^p ( 1 - pt)^{-1}.
\end{equation}
\end{corollary}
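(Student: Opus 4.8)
The plan is to derive this as a specialization of the general identity~\eqref{eqn:general-identity} in Proposition~\ref{prop:general-identity}, followed by an application of Corollary~\ref{cor:special1}. The key observation is that the factor $x_e$ is attached only to those parts $1^e$ of $\sigma$ coming from the degree-$1$ factor, i.e.\ from linear polynomials over $\F_p$, while all higher-degree parts contribute trivially. So in~\eqref{eqn:general-identity} I would set $x_{1e} = x_e$ for all $e \ge 0$ (consistent with the conventions $x_{10} = 1 = x_0$ already fixed in the excerpt), and $x_{de} = 1$ for all $d \ge 2$ and all $e \ge 0$. With this substitution, $x_\sigma = \prod_{d^e \in \sigma} x_{de} = \prod_{1^e \in \sigma} x_e$, so the left-hand side of~\eqref{eqn:general-identity} becomes exactly the left-hand side of~\eqref{eqn:identity2}.

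Next I would evaluate the right-hand side of~\eqref{eqn:general-identity} under the same substitution. The $d=1$ factor is $\left(\sum_{e=0}^\infty x_e t^e\right)^{N_1} = \left(\sum_{n=0}^\infty x_n t^n\right)^p$, since $N_1 = p$ (there are $p$ monic linear polynomials over $\F_p$). Every factor with $d \ge 2$ becomes $\left(\sum_{e=0}^\infty t^{de}\right)^{N_d} = (1-t^d)^{-N_d}$. Hence the product over all $d \ge 1$ equals
\[
\left(\sum_{n=0}^\infty x_n t^n\right)^p \cdot \prod_{d=2}^\infty (1-t^d)^{-N_d}
= \left(\sum_{n=0}^\infty x_n t^n\right)^p \cdot (1-t)^{N_1} \cdot \prod_{d=1}^\infty (1-t^d)^{-N_d},
\]
where I have multiplied and divided by the $d=1$ factor $(1-t)^{-N_1} = (1-t)^{-p}$ so as to complete the product to run over all $d \ge 1$. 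Applying Corollary~\ref{cor:special1} to rewrite $\prod_{d=1}^\infty (1-t^d)^{-N_d}$ as $(1-pt)^{-1}$, and using $N_1 = p$, the right-hand side becomes $\left(\sum_{n=0}^\infty x_n t^n\right)^p (1-t)^p (1-pt)^{-1}$, which is exactly the right-hand side of~\eqref{eqn:identity2}.

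The only point requiring a little care is that the manipulations take place in $\Z[x_1, x_2, \ldots][[t]]$: the infinite product $\prod_{d=1}^\infty(\cdots)$ converges in the $t$-adic topology because the $d$-th factor is $1 + O(t^d)$, so reindexing and splitting off the $d=1$ term is legitimate, and dividing by $(1-t)^{-p}$ is fine since $(1-t)^{-p}$ is a unit in $\Z[[t]]$. There is no real obstacle here — the whole argument is a bookkeeping specialization of Proposition~\ref{prop:general-identity} — so the main thing to get right is simply matching the conventions $x_0 = x_{10} = 1$ and the value $N_1 = p$, and being explicit about the completion-of-the-product step that invokes Corollary~\ref{cor:special1}.
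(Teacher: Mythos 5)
Your proposal is correct and is essentially the paper's own proof: specialize Proposition~\ref{prop:general-identity} with $x_{1e}=x_e$ and $x_{de}=1$ for $d\ge 2$, then use Corollary~\ref{cor:special1} to evaluate $\prod_{d\ge 2}(1-t^d)^{-N_d}=(1-t)^p(1-pt)^{-1}$. The extra details you supply (splitting off the $d=1$ factor, $N_1=p$, $t$-adic convergence) are exactly the bookkeeping the paper leaves implicit.
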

\begin{proof}
In~\eqref{eqn:general-identity}, set $x_{1e}=x_e$, and set $x_{de}=1$ for all~$d\ge2$.  Then, by Corollary~\ref{cor:special1},
we have
\[
\prod_{d=2}^\infty (1-t^d)^{-N_d} = (1-t)^p(1-pt)^{-1},
\]
yielding~\eqref{eqn:identity2}.
\end{proof}

\subsection{Resultants, coprime factorizations, and independence}

\subsubsection{Resultants}
We begin with an observation about resultants of polynomials in $\Zp[x]$
and their behavior upon reduction modulo~$p$.

\begin{lemma}\label{lem:resultant-reduction}
Let $f,g\in\Zp[x]$ have degrees~$m$ and~$n$ respectively.
\begin{enumerate}
\item If the leading coefficients of $f$ and~$g$ are both units, then
  $\overline{\Res(f,g)}=\Res(\fbar,\gbar)$.
\item If the leading coefficient~$a_m$ of $f$ is a unit and
  $d=\deg(\gbar)<n$, then $\overline{\Res(f,g)} =
  \overline{a_m}^{n-d}\Res(\fbar,\gbar)$.
\item If the leading coefficients of $f$ and $g$ are both non-units, then
  $\overline{\Res(f,g)}=0$.
\end{enumerate}
\end{lemma}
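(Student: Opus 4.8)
The resultant $\Res(f,g)$ of two polynomials $f = \sum_{i=0}^m a_i x^i$ and $g = \sum_{j=0}^n b_j x^j$ is by definition the determinant of the $(m+n)\times(m+n)$ Sylvester matrix, whose entries are among the coefficients $a_i$ of $f$ and $b_j$ of $g$, with zeros elsewhere. Since this is a polynomial expression in the $a_i$ and $b_j$ with integer coefficients, it commutes with the ring homomorphism $\Z_p \to \F_p$ of reduction modulo $p$; that is, $\overline{\Res(f,g)}$ equals the determinant of the reduction of the Sylvester matrix. The whole lemma is therefore a matter of comparing this reduced Sylvester determinant with the honest Sylvester determinant of $\fbar$ and $\gbar$, whose sizes may differ from $m+n$ precisely when leading coefficients vanish mod $p$. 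I would organize the proof around how many of the top coefficients $a_m$, $b_n$ reduce to zero.

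For part (1), if $\overline{a_m}\ne 0$ and $\overline{b_n}\ne 0$ then $\deg\fbar = m$ and $\deg\gbar = n$, so the Sylvester matrix of $\fbar$ and $\gbar$ is exactly the reduction of the Sylvester matrix of $f$ and $g$; taking determinants gives $\overline{\Res(f,g)} = \Res(\fbar,\gbar)$ directly. For part (3), if $\overline{a_m}=\overline{b_n}=0$, then in the reduced Sylvester matrix the first column (which contains only $\overline{a_m}$ from the $f$-rows and $\overline{b_n}$ from the $g$-rows, with zeros elsewhere) is identically zero, so the determinant vanishes and $\overline{\Res(f,g)}=0$. Part (2) is the substantive case: here $\overline{a_m}\ne 0$ but $\gbar$ has degree $d<n$, so $\overline{b_n}=\cdots=\overline{b_{d+1}}=0$. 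I would reduce the $(m+n)\times(m+n)$ Sylvester matrix modulo $p$ and then perform Laplace/cofactor expansion (or a block-triangular argument) exploiting the $n-d$ leading zero coefficients of $\gbar$: the first $n-d$ columns of the reduced matrix have support only in the $f$-rows, each contributing a factor $\overline{a_m}$ along a staircase, and what remains after stripping these rows and columns is exactly the Sylvester matrix of $\fbar$ (degree $m$) against $\gbar$ (degree $d$), of size $m+d$. This yields $\overline{\Res(f,g)} = \overline{a_m}^{\,n-d}\Res(\fbar,\gbar)$. The standard identity $\Res(f,g) = a_m^{n-d}\Res(f,\tilde g)$ when $\deg g$ drops to $d$ can also be invoked to shortcut this, but I would prefer to argue directly with the Sylvester matrix to keep everything self-contained.

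The main obstacle is purely bookkeeping in part (2): keeping track of which rows and columns of the Sylvester matrix survive after the $n-d$ leading coefficients of $\gbar$ vanish, and verifying that the cofactor expansion peels off exactly the factor $\overline{a_m}^{\,n-d}$ with the correct sign and leaves precisely $\Res(\fbar,\gbar)$ as the remaining minor. A clean way to avoid sign headaches is to note that after the appropriate column operations the reduced Sylvester matrix is block lower-triangular, with one block an $(n-d)\times(n-d)$ triangular matrix having $\overline{a_m}$ on the diagonal and the complementary block equal to the Sylvester matrix of $\fbar$ and $\gbar$; the determinant is then the product of the two block determinants with no sign ambiguity. None of these steps is deep, so the proof is short once the indexing is set up carefully.
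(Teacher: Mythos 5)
Your proposal is correct and matches the paper's approach: the paper's proof simply observes that all three statements follow by examining the $(m+n)\times(m+n)$ Sylvester determinant defining $\Res(f,g)$, which is exactly the reduction-of-the-Sylvester-matrix argument (including the block-triangular bookkeeping for part (2)) that you spell out in detail.
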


\begin{proof}
  These are standard properties of resultants and may be seen by
  examination of the definition of~$\Res(f,g)$ as the value of the
  $(m+n)\times(m+n)$ Sylvester determinant.
\end{proof}
\begin{corollary}\label{cor:unit-resultant}
  Let $f,g\in\Zp[x]$ have degrees~$m$ and~$n$ respectively.
  Then $\Res(f,g)$ is a unit if and only if at least one of the
  leading coefficients of $f,g$ is a unit, and the reductions
  $\fbar,\gbar$ are coprime.
\end{corollary}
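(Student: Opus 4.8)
The plan is to deduce this directly from Lemma~\ref{lem:resultant-reduction}, since $\Res(f,g)\in\Z_p$ is a unit if and only if its reduction $\overline{\Res(f,g)}\in\F_p$ is nonzero. First I would dispose of the case where both leading coefficients are non-units: by part~(3) of the lemma, $\overline{\Res(f,g)}=0$, so $\Res(f,g)$ is not a unit, consistent with the claim (since the hypothesis ``at least one leading coefficient is a unit'' fails). So it remains to treat the case where at least one leading coefficient---say that of $f$, by the symmetry $\Res(f,g)=\pm\Res(g,f)$---is a unit.

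In that case $\deg(\fbar)=m$. I would split into two sub-cases according to whether the leading coefficient of $g$ is also a unit. If it is, then $\deg(\gbar)=n$ and part~(1) gives $\overline{\Res(f,g)}=\Res(\fbar,\gbar)$; if it is not, then $d:=\deg(\gbar)<n$ and part~(2) gives $\overline{\Res(f,g)}=\overline{a_m}^{\,n-d}\Res(\fbar,\gbar)$, and since $\overline{a_m}\neq0$ this is nonzero precisely when $\Res(\fbar,\gbar)\neq0$. In either sub-case, $\overline{\Res(f,g)}\neq 0 \iff \Res(\fbar,\gbar)\neq 0$. Then I would invoke the standard fact that for polynomials over a field (here $\F_p$), with $\fbar$ of degree exactly $m\ge 1$ (a genuine leading coefficient), $\Res(\fbar,\gbar)\neq 0$ if and only if $\fbar$ and $\gbar$ have no common root in $\overline{\F_p}$, i.e.\ if and only if $\gcd(\fbar,\gbar)=1$ in $\F_p[x]$. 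Combining these equivalences yields exactly the statement of the corollary.

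There is one boundary point to check: if $m=0$, then $f$ is a nonzero constant unit, $\Res(f,g)=a_0^{\,n}$ is automatically a unit, $\fbar$ is a nonzero constant hence coprime to everything, and the claim holds trivially; I would note this so the appeal to ``degree exactly $m\ge 1$'' in the field-resultant fact is not vacuously misapplied. I expect no serious obstacle here---the only mild subtlety is bookkeeping the two reduction cases in part~(1) versus part~(2) of the lemma and making sure the unit factor $\overline{a_m}^{\,n-d}$ does not affect the vanishing, which is immediate since $\F_p$ is a domain.
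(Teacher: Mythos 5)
Your argument is correct and is essentially the paper's own (implicit) one: the paper states this corollary without a separate proof, treating it as an immediate consequence of Lemma~\ref{lem:resultant-reduction} together with the standard fact that over a field the resultant vanishes exactly when the two polynomials have a common factor, which is precisely the deduction you carry out, including the harmless $m=0$ boundary case. The only cosmetic point is the degenerate possibility $\gbar=0$ (all coefficients of $g$ divisible by $p$), where part~(2) of the lemma is not literally applicable; there one sees directly that $m\ge 1$ rows of the Sylvester matrix vanish modulo $p$, so $\Res(f,g)$ is a non-unit and $\fbar,\gbar$ are not coprime, in agreement with the statement.
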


Our reason to consider resultants is the following. 

\begin{lemma}\label{lem:jac}
Let $R$ be a ring. 
For any $d\ge1$, we identify $R[x]_d^1\cong R^{d}$ and $R[x]_d\cong R^{d+1}$ as $R$-modules.
\begin{itemize}

\item[{\rm (a)}]
The multiplication map $R[x]_m^1\times R[x]_n^1 \to R[x]_{m+n}^1$ 
has Jacobian given by~$\Res(f,g)$.
 
\item[{\rm (b)}]
The multiplication map $R[x]_m^1\times R[x]_n \to R[x]_{m+n}$ 
has Jacobian given by~$\Res(f,g)$.
\end{itemize}
\end{lemma}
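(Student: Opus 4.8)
The plan is to compute the Jacobian determinant of the polynomial multiplication map directly from the classical Sylvester matrix description of the resultant. First I would fix coordinates: write $f = x^m + \sum_{i=0}^{m-1} a_i x^i$ (or $f = \sum_{i=0}^m a_i x^i$ in case (b) if we want the general-degree version, but here $f$ is monic) and $g = x^n + \sum_{j=0}^{n-1} b_j x^j$ in case (a), respectively $g = \sum_{j=0}^n b_j x^j$ in case (b), and let $h = fg = \sum_{k} c_k x^k$. The map in question sends $(a_0,\dots,a_{m-1}, b_0,\dots,b_{n-1})$ to the tuple of coefficients $(c_0,\dots,c_{m+n-1})$ in case (a), since $c_{m+n}=1$ automatically; in case (b) it sends $(a_0,\dots,a_{m-1},b_0,\dots,b_n)$ to $(c_0,\dots,c_{m+n})$. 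The key observation is that $c_k = \sum_{i+j=k} a_i b_j$ is bilinear in the $a$'s and $b$'s, so the partial derivative $\partial c_k/\partial a_i$ is the coefficient of $x^{k-i}$ in $g$, and $\partial c_k / \partial b_j$ is the coefficient of $x^{k-j}$ in $f$.

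Next I would assemble the Jacobian matrix from these partials and recognise it as (a row/column permutation of) the Sylvester matrix of $f$ and $g$. Concretely, the block of partials $\partial c_k/\partial b_j$ is the $(m+n)\times n$ matrix whose columns are the shifts of the coefficient vector of $f$ — i.e. the ``$f$-block'' of the Sylvester matrix — and the block $\partial c_k/\partial a_i$ is the analogous ``$g$-block'' built from shifts of the coefficient vector of $g$; one has to be slightly careful that since $f$ is monic the $g$-block has only $m$ columns (the derivative with respect to the implicit leading coefficient $a_m = 1$ is dropped), and in case (a) likewise one row is dropped because $c_{m+n}=1$ is not an output coordinate. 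In case (b), $g$ has degree $n$ with $n+1$ free coefficients and $h$ has $m+n+1$ free coefficients, so the matrix is genuinely square of size $m+n+1$; since $f$ is monic we again drop the column for $a_m$, leaving $m$ columns from $f$'s shifts and $n+1$ columns from $g$'s shifts, total $m+n+1$, matching the $m+n+1$ rows. After a permutation of rows/columns (and using that monicity forces the dropped entries to be consistent) this is exactly the $(m+n)\times(m+n)$, resp. $(m+n+1)\times(m+n+1)$, Sylvester determinant, which by definition is $\Res(f,g)$ up to sign; I would check the sign conventions so that the statement holds on the nose, or simply note the Jacobian is $\pm\Res(f,g)$, which suffices for all later applications (and matches the usage in Corollary~\ref{cor:unit-resultant}).

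The main obstacle — really the only subtlety — is the bookkeeping forced by monicity: one must correctly identify which row and which column of the full Sylvester matrix get deleted when passing from the ``all coefficients free'' multiplication map to the one where $f$ (and, in case (a), $h$) is constrained to be monic, and verify that the resulting minor is still the full resultant rather than a proper sub-determinant. This is handled by observing that the deleted column (corresponding to $\partial/\partial a_m$) and the deleted row (corresponding to $c_{m+n}$, in case (a)) meet in the matrix entry $\partial c_{m+n}/\partial a_m = b_n = 1$, so expanding the Sylvester determinant along that row/column shows the surviving minor equals the full determinant. Part (b) is then either an immediate variant of the same computation or can be reduced to part (a) by the substitution $g \mapsto x g'$ on a suitable chart; I would present the direct computation since it is no harder. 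Everything else is the routine verification that the Sylvester matrix entries are precisely the partials computed above, which I would state without grinding through indices.
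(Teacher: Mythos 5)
Your proposal is correct and follows essentially the same route as the paper: compute the partials of $c_k=\sum_{i+j=k}a_ib_j$, recognise the resulting matrix as the Sylvester matrix of $f$ and $g$, and in the non-monic case use that the row for $c_{m+n}$ has a single entry $b_n$ (equal to $1$ in the monic setting) so that expanding along it reduces to the Sylvester determinant $\Res(f,g)$. Your extra care about signs and about which row/column is deleted is harmless (and in any case only $|\Res(f,g)|$ is used later), so nothing essential differs from the paper's argument.
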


\begin{proof}
{We first consider case (a), when both polynomials are monic.} Let 
$f(x)=x^m+\sum_{i=0}^{m-1}a_i x^i$, $g(x)=x^n+\sum_{j=0}^{n-1}b_j x^j$, and  $h(x)=x^{m+n}+\sum_{k=0}^{m+n-1}c_k x^k$ 
be monic polynomials in $R[x]$ 
having degrees $m$, $n$, and $m+n$ respectively.  If~$h(x)=f(x)g(x)$, then $c_k=\sum_{i+j=k}a_i b_j$, and the matrix of partial derivatives of the $c_k$ with respect to the $a_i$ and $b_j$ is precisely the Sylvester matrix whose determinant is $\Res(f,g)$. 

We next consider case (b), and assume that  $f(x)=x^m+\sum_{i=0}^{m-1}a_ix^i\in R[x]_m^1$ is monic while $g(x)=\sum_{j=0}^{n}b_jx^j\in R[x]_n$ is not necessarily so.  Let $f(x)g(x)=\sum_{k=0}^{m+n}c_kx^k$, and let $M$ be the $(m+n+1)\times (m+n+1)$ matrix of partial derivatives of the $c_k$ with respect to the $a_i$ and $b_j$. Since $c_{m+n}=b_n$, the last row consists of 0's except for the final entry which is 1. Expanding the determinant by the last row, we again obtain  $\Res(f,g)$.  \end{proof}

\begin{corollary}
\label{lem:mpbj}
Let $A\subset \Z_p[x]_m^1$, $B\subset \Z_p[x]_n^1$  $($resp.\  $B\subset \Z_p[x]_n)$, and $AB\subset \Z_p[x]_{m+n}^1$ $($resp.\  $AB\subset \Z_p[x]_{m+n})$
be measurable subsets such that  multiplication
induces a bijection
\[
A\times B \to AB = \{ab\mid a\in A, \:b\in B\}.
\]
If $\Res(a,b)\in\Zp^*\,$ for all~$a\in A$ and $b\in B$, then this bijection is
measure-preserving.
\end{corollary}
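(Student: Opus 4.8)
The plan is to deduce the measure-preservation statement from the Jacobian computation in Lemma~\ref{lem:jac} together with the $p$-adic change-of-variables formula. The multiplication map $\mu\colon R[x]_m^1\times R[x]_n^1\to R[x]_{m+n}^1$ (resp.\ $R[x]_m^1\times R[x]_n\to R[x]_{m+n}$) is a polynomial map between $p$-adic manifolds of the same dimension ($m+n$ in the first case, $m+n+1$ in the second), under the identifications $R[x]_d^1\cong R^d$ and $R[x]_d\cong R^{d+1}$ fixed in Section~\ref{sec:prelim}. By Lemma~\ref{lem:jac}, the Jacobian determinant of $\mu$ at the point $(a,b)$ is $\Res(a,b)$. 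The $p$-adic change-of-variables formula states that for a measurable set $A\times B$ on which $\mu$ is injective,
\[
\mu_{\mathrm{Haar}}(\mu(A\times B)) = \int_{A\times B} |\Res(a,b)|_p \; d\mu_{\mathrm{Haar}}(a,b),
\]
where $|\cdot|_p$ is the normalised $p$-adic absolute value.

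First I would note that the hypothesis $\Res(a,b)\in\Z_p^*$ for all $a\in A$, $b\in B$ means precisely that $|\Res(a,b)|_p = 1$ identically on $A\times B$. Substituting this into the change-of-variables formula collapses the integral to $\mu_{\mathrm{Haar}}(A\times B)$, giving $\mu_{\mathrm{Haar}}(AB) = \mu_{\mathrm{Haar}}(A\times B)$, which is the claim. The bijectivity hypothesis guarantees $\mu(A\times B) = AB$ and that $\mu$ is injective on $A\times B$, so the formula applies without further qualification.

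The main technical point to be careful about is the precise form and hypotheses of the $p$-adic change-of-variables formula for measurable (not necessarily open) domains. The cleanest route is to invoke it in the following form: if $\mu$ is a $C^1$ (here polynomial, hence analytic) map that is injective on a measurable set $S$, then for any integrable $\phi$ on $\mu(S)$ one has $\int_{\mu(S)}\phi\,d\mu_{\mathrm{Haar}} = \int_S (\phi\circ\mu)\,|\det J_\mu|_p\,d\mu_{\mathrm{Haar}}$; taking $\phi\equiv 1$ yields what we need. One should also observe that the set where the Jacobian vanishes (i.e.\ where $\Res(a,b)=0$) plays no role here, since by hypothesis it is disjoint from $A\times B$; in particular $\mu$ is a local analytic isomorphism at every point of $A\times B$, so no subtlety about critical points arises. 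The two cases (a) and (b) of the corollary are handled identically, the only difference being the dimension of the ambient space and the corresponding reference to part (a) or part (b) of Lemma~\ref{lem:jac}. I expect the only genuine obstacle to be citing or stating the change-of-variables formula at the right level of generality; everything else is immediate from the Jacobian formula and the observation that a unit resultant means the Jacobian has absolute value $1$.
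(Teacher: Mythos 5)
Your proposal is correct and is precisely the argument the paper intends: the corollary is stated as an immediate consequence of Lemma~\ref{lem:jac}, with the measure-preservation following from the $p$-adic change-of-variables formula once one notes that $\Res(a,b)\in\Z_p^*$ forces the Jacobian to have $p$-adic absolute value $1$ on all of $A\times B$. Your added care about injectivity on measurable sets and the absence of critical points is a faithful filling-in of the details the paper leaves implicit.
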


\subsubsection{Coprime factorizations and Hensel lifting}
We next recall Hensel's lemma for polynomial factorizations in certain 
quantitative forms.  The first is standard, and is stated as Lemma~2.3
in~\cite{Buhler-et-al}, while the variant is mentioned
in~\cite[p.~24]{Buhler-et-al}.

For $f\in\Fp[x]_d^1$, we denote by
$P_f$ the set of polynomials in $\Zp[x]_d^1$ that reduce to~$f$
modulo~$p$; 
and for $n\ge d$, we denote by $P_f^n$ the set of polynomials in $\Zp[x]_n$
that reduce to~$f$ modulo~$p$.  

\begin{lemma}\label{lem:poly-hensel}
Suppose that $g,h\in \Fp[x]$ are monic and coprime. Then the 
multiplication map 
\begin{equation}  \label{eqn:mp}
P_g \times P_h \to P_{gh} \end{equation} is a measure-preserving 
bijection.
\end{lemma}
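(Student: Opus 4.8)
The plan is to prove the Hensel lifting statement (Lemma~\ref{lem:poly-hensel}) directly, using the resultant criterion for measure-preservation already set up in Corollary~\ref{lem:mpbj}, together with the classical Hensel's lemma to establish the bijection. First I would observe that, since $g$ and $h$ are monic and coprime in $\Fp[x]$, for any $f \in P_{gh}$ the reduction $\fbar = gh$ is a coprime factorization into monics, so the classical Hensel lemma (applied over the complete local ring $\Zp$) produces a unique factorization $f = a \cdot b$ with $a \in \Zp[x]_m^1$, $b \in \Zp[x]_n^1$, $\abar = g$, $\bbar = h$ — here $m = \deg g$, $n = \deg h$. This shows $a \in P_g$, $b \in P_h$, and uniqueness gives that multiplication $P_g \times P_h \to P_{gh}$ is a bijection. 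Conversely, any product $ab$ with $a \in P_g$, $b \in P_h$ lands in $P_{gh}$ since reduction is a ring homomorphism, so the map is well-defined.

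For the measure-preserving claim, I would invoke Corollary~\ref{lem:mpbj} with $A = P_g$, $B = P_h$, $AB = P_{gh}$: it remains only to check the hypothesis that $\Res(a,b) \in \Zp^*$ for every $a \in P_g$ and $b \in P_h$. But $a$ and $b$ are monic, so their leading coefficients are units, and $\abar = g$, $\bbar = h$ are coprime by hypothesis; hence by Corollary~\ref{cor:unit-resultant} the resultant $\Res(a,b)$ is a unit. This verifies the hypothesis of Corollary~\ref{lem:mpbj}, and we conclude the bijection \eqref{eqn:mp} is measure-preserving.

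One small point I would be careful about is the normalization of measures: $P_g$ is a coset of the subgroup $p\Zp[x]_m^1 \subseteq \Zp[x]_m^1$ and similarly for $P_h$ and $P_{gh}$, so each carries the restriction of the relevant $p$-adic Haar measure, and Corollary~\ref{lem:mpbj} is exactly the statement that measure-preservation follows from the unit-resultant (i.e.\ unit-Jacobian) condition via the $p$-adic change of variables formula. There is really no serious obstacle here; the only thing to get right is to make sure that the ambient multiplication map $\Zp[x]_m^1 \times \Zp[x]_n^1 \to \Zp[x]_{m+n}^1$ whose Jacobian was computed in Lemma~\ref{lem:jac}(a) is the one restricting to \eqref{eqn:mp}, which is immediate. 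The variant with $h$ replaced by an arbitrary (not necessarily monic) polynomial of degree $n$ — used to define $P_h^n$ and $P_{gh}^n$ — would be handled identically, using part (b) of Lemma~\ref{lem:jac} and the fact that monicity of $g$ alone already forces $\Res(a,b)$ to be a unit once $\abar = g$ and $\bbar = h$ are coprime, so I expect the proof to be short in both cases.
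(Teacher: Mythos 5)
Your proof is correct and follows essentially the same route as the paper: classical Hensel's lemma gives the unique factorization and hence the bijection, and the measure-preservation is deduced from Corollary~\ref{cor:unit-resultant} (monic, coprime reductions imply unit resultant) together with Corollary~\ref{lem:mpbj}. No gaps.
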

\begin{proof}
Let $f\in\Zp[x]_n^1$ be such that $\fbar$ factors in~$\Fp[x]$ as
$\fbar= g h$. Then by Hensel's lemma $f$ factors uniquely in $\Z_p[x]$ 
as $f=\tilde g \tilde h$, where $\tilde g \in P_{g}$ and $\tilde h \in P_{h}$. 
Therefore~\eqref{eqn:mp} is a bijection. The measure-preserving property 
holds by Corollaries~\ref{cor:unit-resultant} and~\ref{lem:mpbj}.
\end{proof}

The following variants will be used to handle polynomials 
$f \in \Zp[x]$ whose leading coefficient is not a unit.

\begin{lemma}\label{lem:poly-hensel-0}
Let $n \ge m \ge 0$, and consider the multiplication map
\begin{equation}\label{eqn:mp0}
 \mu:\  \Z_p[x]^1_m
 \times
 P_1^{n-m} 
 \to \{ f \in \Zp[x]_n : \fbar \in \Fp[x]^1_m \}.
\end{equation}
\begin{itemize}
\item[{\rm (a)}] $\mu$ is a measure-preserving bijection.
\item[{\rm (b)}] Let $r$ be an integer satisfying $m\le r\le n$.  In
  \eqref{eqn:mp0}, replace the set on the right-hand side with the
  subset of $f\in \Zp[x]_n$ also satisfying $p^{r}f(x/p)\equiv
  x^{n}\pmod{p}$, and replace the second factor on the left-hand side
  with the subset of $h\in P_1^{n-m}$ satisfying $p^{r-m}h(x/p)\equiv
  x^{n-m}\pmod{p}$.  Then the restriction of~$\mu$ to these subsets is
  still a measure-preserving bijection.
\end{itemize}
\end{lemma}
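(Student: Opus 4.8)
The plan is to prove part~(a) as a quantitative form of Hensel's lemma (equivalently, Weierstrass preparation) together with a resultant computation, and then to obtain part~(b) from part~(a) by restricting the measure-preserving bijection of~(a) to suitable measurable subsets.

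For part~(a), note first that the multiplication map does land in the indicated set: if $g\in\Zp[x]_m^1$ and $h\in P_1^{n-m}$ then $\deg(gh)\le n$ and $\overline{gh}=\gbar\cdot1=\gbar$ is monic of degree~$m$. For the converse I would take $f$ in the target and lift the (trivially coprime) factorization $\fbar=\fbar\cdot1$ in $\Fp[x]$ by successive approximation. Start from $g^{(0)}$ any monic degree-$m$ lift of $\fbar$ and $h^{(0)}=1$; given $g^{(k)}\in\Zp[x]_m^1$ and $h^{(k)}$ with $\overline{h^{(k)}}=1$, $\deg h^{(k)}\le n-m$, and $f\equiv g^{(k)}h^{(k)}\pmod{p^{k+1}}$, write $f-g^{(k)}h^{(k)}=p^{k+1}e$ and solve $g^{(k)}\eta+h^{(k)}\gamma\equiv e\pmod p$ with $\deg\gamma<m$ and $\deg\eta\le n-m$; since $g^{(k)}$ is monic this is precisely division with remainder of $\overline{e}$ by $\overline{g^{(k)}}$, which has a unique solution, and the bound $\deg\overline{e}\le n$ (hence $\deg\eta\le n-m$) is maintained throughout. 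Passing to $p$-adic limits yields $g\in\Zp[x]_m^1$ and $h\in P_1^{n-m}$ with $f=gh$, and the uniqueness at each step forces $(g,h)$ to be unique; thus the map is a bijection onto the stated set. Finally, by Lemma~\ref{lem:jac}(b) the Jacobian of multiplication $\Zp[x]_m^1\times\Zp[x]_{n-m}\to\Zp[x]_n$ is $\Res(g,h)$, which is a unit on our subsets since $g$ is monic and the reductions $\gbar$, $\hbar=1$ are coprime (Corollary~\ref{cor:unit-resultant}). Corollary~\ref{lem:mpbj} then promotes the bijection to a measure-preserving one.

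For part~(b), the crucial remark is the factorization
\[
p^{r}(gh)(x/p)=\bigl(p^{m}g(x/p)\bigr)\cdot\bigl(p^{r-m}h(x/p)\bigr)\qquad\text{in }\Qp[x],
\]
available because $r\ge m$, together with the fact that for $g=x^m+g_{m-1}x^{m-1}+\cdots+g_0\in\Zp[x]_m^1$ the polynomial $p^mg(x/p)=x^m+p\,g_{m-1}x^{m-1}+\cdots+p^{m}g_0$ lies in $\Zp[x]$, is monic of degree~$m$, and satisfies $p^mg(x/p)\equiv x^m\pmod p$. I would use these to verify, for every $g\in\Zp[x]_m^1$, that $gh$ lies in the restricted target set (i.e.\ $p^{r}(gh)(x/p)\in\Zp[x]$ with reduction $x^{n}$) if and only if $h$ lies in the restricted second factor (i.e.\ $p^{r-m}h(x/p)\in\Zp[x]$ with reduction $x^{n-m}$): the ``if'' direction is immediate from the displayed factorization and the congruence for $p^mg(x/p)$; for ``only if'', divide $p^{r}(gh)(x/p)\in\Zp[x]$ by the monic polynomial $p^mg(x/p)\in\Zp[x]$ --- uniqueness of division by a monic polynomial forces the quotient to lie in $\Zp[x]$ and to equal $p^{r-m}h(x/p)$ --- and then reduce modulo~$p$ and cancel the factor $x^m$ in the integral domain $\Fp[x]$. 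Since the map of~(a) is a measure-preserving bijection between compact spaces, its restriction to the measurable subset $\Zp[x]_m^1\times\{\,h\in P_1^{n-m}: p^{r-m}h(x/p)\equiv x^{n-m}\pmod p\,\}$ is again a measure-preserving bijection onto its image, which the equivalence above identifies with exactly the restricted target set. That is the assertion of~(b).

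I do not anticipate a serious obstacle: part~(a) is a textbook Hensel/Weierstrass argument and part~(b) is a short deduction from it. The point needing care is the bookkeeping in~(b) --- in particular, that the conditions ``$p^{r}f(x/p)\equiv x^{n}\pmod p$'' and ``$p^{r-m}h(x/p)\equiv x^{n-m}\pmod p$'' are to be read as including the (genuine, Newton-polygon-type) requirement that the rescaled polynomials have coefficients in~$\Zp$ at all, that the division-with-remainder step above is legitimate, and that all the subsets involved are measurable. An alternative would be to rerun the successive-approximation proof of~(a) while additionally tracking the valuations of the coefficients of $g^{(k)},h^{(k)}$ under $x\mapsto x/p$, but deducing~(b) from~(a) as above seems cleaner.
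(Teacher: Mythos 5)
Your proof is correct and takes essentially the same approach as the paper: part (a) is the same Hensel lifting of the coprime factorization $\fbar\cdot 1$ combined with the resultant/Jacobian argument via Lemma~\ref{lem:jac} and Corollaries~\ref{cor:unit-resultant} and~\ref{lem:mpbj}, and part (b) rests on the same identity $p^{r}f(x/p)=\bigl(p^{m}g(x/p)\bigr)\cdot\bigl(p^{r-m}h(x/p)\bigr)$ together with the observation that $p^m g(x/p)\equiv x^m\pmod p$ for monic $g$. The only cosmetic differences are that you run the Hensel iteration by hand where the paper homogenises and quotes the standard lemma, and that you obtain the integrality of $p^{r-m}h(x/p)$ by division by the monic polynomial $p^m g(x/p)$ where the paper invokes Gauss's Lemma.
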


\begin{proof}
(a) Let $f\in\Zp[x]_n$ be such that $\fbar$ is monic of degree $m$.
  Then homogenising, applying Hensel's lemma, and dehomogenising,
  shows that $f$ factors uniquely in $\Zp[x]$ as $f=gh$ where $g \in
  \Z_p[x]^1_m$ and $h \in P_1^{n-m}$.  Therefore, \eqref{eqn:mp0} is a
  bijection. The measure-preserving property again holds by
  Corollaries~\ref{cor:unit-resultant} and~\ref{lem:mpbj}, since $g$
  is monic.

  (b) Suppose that $f\in\Zp[x]_n$ is such that $\fbar$ is monic of
  degree $m$, and also that $p^rf(x/p)\equiv x^n\pmod{p}$; by this we
  mean that $p^rf(x/p)\in\Z_p[x]$ and $p^rf(x/p)-x^n\in p\Z_p[x]$.
  Factoring $f=gh$ as before, we have
  \[
  p^m g(x/p) \cdot p^{r-m} h(x/p) = p^r f(x/p) \equiv x^n \pmod{p};
  \]
  since $g\in\Z_p[x]_m^1$, we have $p^m g(x/p)\in\Z_p[x]$ and $p^m
  g(x/p)\equiv x^m\pmod{p}$.  Hence, by Gauss's Lemma, $p^{r-m}
  h(x/p)\in\Z_p[x]$, and (using unique factorization in $\F_p[x]$)
  also $p^{r-m} h(x/p) \equiv x^{n-m}\pmod{p}$.  Conversely if $h$
  satisfies $p^{r-m} h(x/p) \equiv x^{n-m}\pmod{p}$, then since $p^m
  g(x/p)\equiv x^m\pmod{p}$ for all $g\in\Z_p[x]_m^1$, it follows that
  $f=gh$ satisfies  $p^rf(x/p)\equiv x^n\pmod{p}$.

  Thus $\mu$ restricts to a bijection between the subsets on each
  side, and is measure-preserving, as before.
\end{proof}

\subsubsection{Independence lemmas}

Finally, we may phrase
Lemmas~\ref{lem:poly-hensel}
and \ref{lem:poly-hensel-0} as statements regarding the  independence of   suitable random variables. 

\begin{corollary} 
\label{cor:indep}
Let $g,h\in\Fp[x]$ be coprime monic polynomials.
For $f\in P_{gh}$, let $\pi_1$ and $\pi_2$ denote the projections of $P_{gh}$ onto $P_g$ and $P_h$, respectively,  under the bijection $P_{gh}\to P_g\times P_h$.  
 Then the number of $\Qp$-roots of 
$f \in P_{gh}$ is $X + Y$,  where $X,Y:P_{gh}\to \{0,1,2, \ldots
\}$ are   independent~random variables  distributed on $f\in P_{gh}$ as the number
of $\Qp$-roots of $\pi_1(f)\in P_g$ and $\pi_2(f)\in P_h$, respectively. 
\end{corollary}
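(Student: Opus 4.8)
The plan is to deduce the statement from the measure-preserving bijection $P_g\times P_h\to P_{gh}$ of Lemma~\ref{lem:poly-hensel}, combined with a simple algebraic additivity of root counts. First I would record the algebraic fact. Given $f\in P_{gh}$, write $f=\tilde g\tilde h$ with $\tilde g=\pi_1(f)\in P_g$ and $\tilde h=\pi_2(f)\in P_h$. Since $\gbar$ and $\hbar$ are coprime in $\Fp[x]$, Corollary~\ref{cor:unit-resultant} gives $\Res(\tilde g,\tilde h)\in\Zp^{*}$; in particular $\Res(\tilde g,\tilde h)\neq 0$, so $\tilde g$ and $\tilde h$ have no common root in $\overline{\Qp}$. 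As $\tilde g$ and $\tilde h$ are monic, the set of roots of $f$ in $\overline{\Qp}$ is the disjoint union of the roots of $\tilde g$ and the roots of $\tilde h$; intersecting with $\Qp$ shows that the number of $\Qp$-roots of $f$ equals the number of $\Qp$-roots of $\tilde g$ plus the number of $\Qp$-roots of $\tilde h$.

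Next I would introduce the random variables. Let $N$ denote the function sending a polynomial in $\Zp[x]$ to its number of roots in $\Qp$, and set $X=N\circ\pi_1$ and $Y=N\circ\pi_2$ on $P_{gh}$; by the previous paragraph the number of $\Qp$-roots of $f$ equals $X(f)+Y(f)$. By Lemma~\ref{lem:poly-hensel} the bijection $P_{gh}\to P_g\times P_h$ is measure-preserving, and since $\mu(P_{gh})=p^{-\deg(gh)}=p^{-\deg g}p^{-\deg h}=\mu(P_g)\mu(P_h)$, after normalising to probability measures it identifies the uniform measure on $P_{gh}$ with the product of the uniform measures on $P_g$ and on $P_h$. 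Under this identification, $X$ depends only on the $P_g$-coordinate and is distributed as the root-count function on $P_g$, while $Y$ depends only on the $P_h$-coordinate and is distributed as the root-count function on $P_h$; being functions of independent coordinates of a product measure, $X$ and $Y$ are independent.

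I do not expect a genuine obstacle here: the corollary is essentially a repackaging of Lemma~\ref{lem:poly-hensel}. The only point that needs a little care is the passage from coprimality of $\gbar,\hbar$ over $\Fp$ to the absence of common roots of $\tilde g,\tilde h$ over $\overline{\Qp}$, and this is exactly what the nonvanishing of the resultant in Corollary~\ref{cor:unit-resultant} provides; the remainder is routine bookkeeping with the product-measure structure.
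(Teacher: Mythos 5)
Your proposal is correct and is essentially the argument the paper intends: the corollary is stated as an immediate consequence of Lemma~\ref{lem:poly-hensel}, with the additivity of root counts coming from the unit resultant (Corollary~\ref{cor:unit-resultant}) and independence from the measure-preserving product decomposition $P_{gh}\cong P_g\times P_h$. Your explicit check that $\mu(P_{gh})=\mu(P_g)\mu(P_h)$, so that the normalised measures match the product probability measure, is exactly the routine bookkeeping the paper leaves implicit.
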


\begin{corollary} \label{cor:stability}
Let $m \le n$,  and let $$B_{m,n} := \{ f \in \Zp[x]_n : \fbar \in \Fp[x]^1_m \}.$$
For $f\in B_{m,n},$ let $\psi_1$ and $\psi_2$ denote the projections of $B_{m,n}$ onto $\Z_p[x]_m^1$ and $P_1^{n-m}$, respectively,  under the bijection $B_{m,n}\to \Z_p[x]_m^1\times P_1^{n-m}$. Let
$X,Y : B_{m,n} \to \{0,1,2,\ldots\}$ be the random variables
giving the numbers of roots of $f \in B_{m,n}$ in $\Z_p$ and in $\Qp \setminus \Zp$,   respectively. Then $X$ and $Y$ are independent random 
variables  distributed on $f\in B_{m,n}$ as the number of $\Qp$-roots
of $\psi_1(f)(x)\in \Z_p[x]_m^1$ and of
$\psi_2(f)^\rev(x):=x^{n-m}\psi_2(f)(1/x)\in P_{x^{n-m}}$,
respectively.
\end{corollary}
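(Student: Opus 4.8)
The plan is to deduce this directly from Lemma~\ref{lem:poly-hensel-0}(a) by tracking what the factorization $f = gh$ means for the roots of $f$. First I would recall the setup: by Lemma~\ref{lem:poly-hensel-0}(a), multiplication gives a measure-preserving bijection $\Z_p[x]_m^1 \times P_1^{n-m} \to B_{m,n}$, so writing $g = \psi_1(f)$ and $h = \psi_2(f)$, the pair $(g,h)$ is equidistributed in the product, and consequently \emph{any} random variable depending only on $g$ is independent of any random variable depending only on $h$. So the entire content to verify is the pointwise identity: for $f = gh \in B_{m,n}$, the number of $\Z_p$-roots of $f$ equals the number of $\Q_p$-roots of $g$, and the number of roots of $f$ in $\Q_p \setminus \Z_p$ equals the number of $\Q_p$-roots of the reversal $h^\rev$.

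The first half is essentially immediate once one observes that $h \in P_1^{n-m}$ reduces mod $p$ to a polynomial with nonzero constant term (indeed $\bar h = \bar h(0) \neq 0$ as a constant, since $h$ reduces to $1 \cdot x^0$... more precisely $\bar h \in \Fp^\times$), hence $h$ has no root in $\Z_p$: any $\alpha \in \Z_p$ with $h(\alpha) = 0$ would give $\bar h(\bar\alpha) = 0$, contradicting $\bar h \in \Fp^\times$. Wait --- I must be careful: $P_1^{n-m}$ is the set of polynomials in $\Z_p[x]_{n-m}$ reducing to the constant polynomial $1 \in \Fp[x]_1^1$, so indeed $\bar h = 1$, and $h$ has no $\Z_p$-root. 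Since $g$ is monic, $g$ and $h$ have no common root, so the $\Z_p$-roots of $f = gh$ are exactly the $\Z_p$-roots of $g$; and as $g$ is monic with $\Z_p$-coefficients, every $\Q_p$-root of $g$ is automatically in $\Z_p$ by the rational root theorem (a root $u/v$ in lowest terms must have $v \mid 1$). Hence the number of $\Z_p$-roots of $f$ equals the number of $\Q_p$-roots of $g = \psi_1(f)$, as claimed.

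For the second half, the roots of $f$ in $\Q_p \setminus \Z_p$ are exactly the roots of $h$ in $\Q_p \setminus \Z_p$ (again using that $g$ and $h$ share no roots, and $g$ has all its roots in $\Z_p$). The reversal substitution $x \mapsto 1/x$ sets up a bijection between the nonzero roots of $h$ and the nonzero roots of $h^\rev(x) = x^{n-m} h(1/x)$, and under this bijection roots of $h$ in $\Q_p \setminus \Z_p$ (which all have negative valuation, since $h$ has no root in $\Z_p$) correspond to roots of $h^\rev$ of positive valuation, in particular roots in $\Z_p$ --- and since $h^\rev \in P_{x^{n-m}}$ (its reduction is $x^{n-m}$ because $\bar h$ is a nonzero constant, so $\overline{h^\rev} = x^{n-m}\bar h(1/x) = \bar h(0) x^{n-m}$, a unit times $x^{n-m}$), and $0$ is a root of $h^\rev$ only if $\deg h < n-m$, one checks that the nonzero $\Q_p$-roots of $h^\rev$ are exactly the $\Q_p$-roots lying in $\Z_p$, equivalently all of them, since any root of $h^\rev$ in $\Q_p$ has nonnegative valuation (its reduction being $x^{n-m}$ up to a unit forces $h^\rev$ to have no root of negative valuation). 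Thus the number of roots of $f$ in $\Q_p \setminus \Z_p$ equals the number of $\Q_p$-roots of $h^\rev = \psi_2(f)^\rev$. Combining the two halves with the independence coming from Lemma~\ref{lem:poly-hensel-0}(a) completes the proof.

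The main obstacle --- really just a bookkeeping point rather than a genuine difficulty --- is making the reversal argument airtight when $\deg h < n - m$, i.e.\ when $h$ "has a root at infinity": one must confirm that the count of $\Q_p$-roots of $h^\rev$ (with the convention that we count roots in $\Q_p$, so $x = 0$ is or isn't counted according to whether $h^\rev(0) = 0$) matches the count of roots of $h$ in $\Q_p \setminus \Z_p$ exactly, including multiplicity-free behavior. This is harmless because repeated roots occur with probability zero and, more importantly, because the statement only concerns the induced distributions of $X$ and $Y$; but it is the one place where the bijection-on-roots requires the explicit reduction computations $\bar h \in \Fp^\times$ and $\overline{h^\rev} = (\text{unit}) \cdot x^{n-m}$ above.
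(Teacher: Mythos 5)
Your proof is correct and is essentially the argument the paper intends: Corollary~\ref{cor:stability} is presented as a direct rephrasing of Lemma~\ref{lem:poly-hensel-0}(a), with the measure-preserving bijection supplying the independence and the reversal $x\mapsto 1/x$ identifying the roots of $f$ in $\Q_p\setminus\Z_p$ with the $\Q_p$-roots of $\psi_2(f)^\rev$, exactly as you do. Your closing bookkeeping remark is also the right one: the only pointwise discrepancy (the root at $0$ of $\psi_2(f)^\rev$ when $\deg\psi_2(f)<n-m$) occurs on a measure-zero set, so the distributional claim is unaffected.
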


\section{Proof of Theorem~\ref{thm:totsplit}}\label{sec:proofof12}

\subsection{Theorem~\ref{thm:totsplit}(b) implies 
Theorem~\ref{thm:totsplit}(a)}
\label{b=>a}
Theorem~\ref{thm:totsplit}(b) allows us to compute $\alpha(n,d)$, $\beta(n,d)$,
and $\rho(n,d)$ for any $n$ and $d$. Indeed we use~\eqref{alphatobeta:mom} 
and \eqref{betatoalpha:mom} to solve for the $\alpha$'s 
and $\beta$'s, and then~\eqref{rho:mom} to compute the $\rho$'s. 
The answers obtained are rational functions of $p$. 
The relation~\eqref{alphatobeta:mom} is invariant under replacing 
$t\rightarrow t/p$ and switching 
$p \leftrightarrow 1/p$ and $\genA_d \leftrightarrow \genB_d$, while 
the relation~\eqref{betatoalpha:mom}
is invariant under switching 
$p \leftrightarrow 1/p$ and $\genA_d \leftrightarrow \genB_d$. 
The symmetry~\eqref{fe:alphabeta}
then follows by induction on $n$ and $d$, while~\eqref{fe:rho} 
follows from~\eqref{rho:mom}. 
\qed

\subsection{Proof of Theorem~\ref{thm:totsplit}(b)}

\subsubsection{Conditional expectations}

The expectations $\alpha(n,d)$ and $\beta(n,d)$ were defined in the
introduction. To help evaluate them, we make the following
additional definitions.

\begin{definition}
\label{def:cond}
\phantom{hello}

\begin{itemize}
\item[{\rm (i)}] For $f\in\F_p[x]_n^1$, let $\alpha(n,d \mid f)$ denote the 
  expected number of $d$-sets of $\Qp$-roots of a polynomial in 
  $P_f \subset \Zp[x]_n^1$. Since $P_{f}$ has relative 
  density $p^{-n}$ in $\Zp[x]_n^1$, we have 
\begin{equation} \label{eqn:alpha-f}
  \alpha(n,d) = p^{-n} \sum_{f\in\Fp[x]_n^1} \alpha(n,d \mid f).
\end{equation}
Also,  $\beta(n,d)=\alpha(n,d\mid x^n)$.  
\item[{\rm (ii)}] For $\sigma$ in $\stypes(n)$, let $\alpha(n,d \mid \sigma)$ be
  the expected number of $d$-sets of $\Qp$-roots of a polynomial
  in~$\Zp[x]_n^1$ whose mod $p$ splitting type is $\sigma$.  Thus
\begin{equation} \label{eqn:alpha-sigma}
  \alpha(n,d) = p^{-n} \sum_{\sigma\in\stypes(n)} N_{\sigma} \,
  \alpha(n,d \mid \sigma),
\end{equation}
and
\begin{equation} \label{eqn:alpha-sigma-f}
\alpha(n,d \mid \sigma) = 
N_{\sigma}^{-1} \sum_{f\in\Fp[x]_n^1:\ \sigma(f)=\sigma}\alpha(n,d\mid f),
\end{equation}
where $\sigma(f)$ denotes the splitting type of $f$.
\end{itemize}
\end{definition}

\subsubsection{Writing the \texorpdfstring{$\alpha$}{α}'s in terms of the \texorpdfstring{$\beta$}{β}'s}
\label{sec:atob}

The aim of this subsection is to prove~\eqref{alphatobeta:mom}, the first
part of Theorem~\ref{thm:totsplit}(b).

\begin{lemma}
\label{lem:exps}
Let $g, h \in\Fp[x]$ be monic and coprime.  Then
\begin{equation}
\label{nostars}
 \alpha(\deg(gh),d \mid  g h) = \sum_{d_1+d_2=d}
   \alpha(\deg(g),d_1 \mid g)\cdot \alpha(\deg(h),d_2 \mid h),
\end{equation}
where the sum is over all pairs $(d_1,d_2)$ of non-negative integers
summing to $d$.

If, additionally, $h$ has no roots in~$\Fp$, then
\[
\alpha(\deg(gh),d \mid g h) = \alpha(\deg(g),d \mid g).
\]

\end{lemma}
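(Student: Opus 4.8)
The plan is to reduce both statements to the independence result of Corollary~\ref{cor:indep}. First I would invoke Lemma~\ref{lem:poly-hensel}: since $g$ and $h$ are monic and coprime in $\F_p[x]$, multiplication gives a measure-preserving bijection $P_g\times P_h\to P_{gh}$. Under this identification, Corollary~\ref{cor:indep} tells us that for a random $f\in P_{gh}$, the number of $\Q_p$-roots of $f$ equals $X+Y$, where $X$ counts the $\Q_p$-roots of $\pi_1(f)\in P_g$ and $Y$ counts those of $\pi_2(f)\in P_h$, and $X$ and $Y$ are independent. (Here one uses that every $\Q_p$-root of $f=\pi_1(f)\pi_2(f)$ is a root of exactly one of the two factors, since a common root would force $\Res(\pi_1(f),\pi_2(f))$ into $p\Z_p$, contradicting coprimality of $\bar g,\bar h$ via Corollary~\ref{cor:unit-resultant}; and no root is repeated between the factors for the same reason.)

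Next I would translate the $d$-set count into a polynomial identity in the random variables. The number of $d$-sets of $\Q_p$-roots of $f$ is $\binom{X+Y}{d}$, so by the Vandermonde convolution $\binom{X+Y}{d}=\sum_{d_1+d_2=d}\binom{X}{d_1}\binom{Y}{d_2}$. Taking expectations over $f\in P_{gh}$ and using independence of $X$ and $Y$,
\[
\E\!\left[\binom{X+Y}{d}\right]=\sum_{d_1+d_2=d}\E\!\left[\binom{X}{d_1}\right]\E\!\left[\binom{Y}{d_2}\right].
\]
The left-hand side is $\alpha(\deg(gh),d\mid gh)$ by definition, while the factors on the right are $\alpha(\deg(g),d_1\mid g)$ and $\alpha(\deg(h),d_2\mid h)$, respectively, since the distribution of $X$ on $P_{gh}$ matches that of the root-count on $P_g$ and similarly for $Y$. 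This gives \eqref{nostars}.

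For the second assertion, suppose additionally that $h$ has no roots in $\F_p$. Then any $\Q_p$-root of a polynomial in $P_h$ would have to be a $p$-adic integer (a root outside $\Z_p$ would make the leading coefficient a non-unit, but elements of $P_h$ are monic), and reducing mod $p$ would give a root of $h$ in $\F_p$ — impossible. Hence $Y\equiv 0$ on $P_h$, so $\E[\binom{Y}{d_2}]$ equals $1$ if $d_2=0$ and $0$ otherwise. Only the term $d_1=d$, $d_2=0$ survives in \eqref{nostars}, yielding $\alpha(\deg(gh),d\mid gh)=\alpha(\deg(g),d\mid g)$. I expect the main point requiring care to be the verification that the root counts split cleanly across the two factors with no double-counting — i.e.\ that the event "$f$ has a common root of $\pi_1(f)$ and $\pi_2(f)$" has measure zero — which is exactly what the unit-resultant statement of Corollary~\ref{cor:unit-resultant} delivers, so this is really a matter of citing the right preliminary rather than a genuine obstacle.
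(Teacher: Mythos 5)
Your proposal is correct and follows essentially the same route as the paper: the paper's proof likewise combines Corollary~\ref{cor:indep} with the identity $\E \binom{X+Y}{d} = \sum_{d_1+d_2=d} \E\binom{X}{d_1}\E\binom{Y}{d_2}$ for independent $X,Y$. Your extra remarks (no common roots via the unit resultant, and the vanishing of $Y$ when $h$ has no roots in $\Fp$) are details the paper leaves implicit, but they do not change the argument.
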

\begin{proof}
The lemma follows from Corollary~\ref{cor:indep} and the observation
that if $X$ and $Y$ are independent random variables taking values in
$\{0,1,2, \ldots \}$ then 
\begin{equation} \label{eq:exp-sum}
\E \binom{X+Y}{d} \,=\, \sum_{d_1+d_2 = d}
\E \binom{X}{d_1} \E \binom{Y}{d_2}. \qedhere
\end{equation}
\end{proof}

Recall that $\beta(n,d) = \alpha(n,d \mid x^n)$
is the expected number of $d$-sets of roots of a monic polynomial of
degree~$n$ which reduces to~$x^n$ modulo~$p$. Using Lemma~\ref{lem:exps}, we can express $\alpha(n,d \mid f)$ for
monic~$f\in\Fp[x]_n$ in terms of $\beta(n',d')$ for
appropriate~$n',d'$.  

\begin{lemma}
\label{lem:alphasfrombetas}
Let $\sigma = (1^{n_1} \cdots 1^{n_k} \cdots) \in\stypes(n)$ be a
splitting type with exactly~$k=m_1(\sigma)$ powers of $\,1$. 
Then
\begin{equation}\label{eqn:sigmaeq} \alpha(n,d \mid \sigma)
= \sum_{d_1 + \cdots + d_k = d} \,\, \prod_{i=1}^k \beta(n_i,d_i).
\end{equation}
\end{lemma}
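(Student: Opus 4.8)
The plan is to reduce the multi-factor statement to an iterated application of the two-factor splitting already recorded in Lemma~\ref{lem:exps}, and then to identify the single-factor conditional expectations attached to the linear factors with the $\beta(n_i, d_i)$'s. Write $\sigma = (1^{n_1}\cdots 1^{n_k}\, \tau)$, where $\tau$ collects all the symbols $d_j^{e_j}$ with $d_j \ge 2$, so that $\tau$ is a splitting type of degree $n - (n_1 + \cdots + n_k)$ containing no powers of $1$. A polynomial $f \in \F_p[x]_n^1$ with $\sigma(f) = \sigma$ factors (non-uniquely, as we range over such $f$) as $f = \prod_{i=1}^k (x - a_i)^{n_i} \cdot f_\tau$, where the $a_i \in \F_p$ are distinct and $f_\tau$ has splitting type $\tau$ and no roots in $\F_p$. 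The factors $(x-a_i)^{n_i}$ and $f_\tau$ are pairwise coprime in $\F_p[x]$.

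First I would apply Lemma~\ref{lem:exps} inductively, peeling off one coprime factor at a time, to obtain
\[
\alpha(n, d \mid f) = \sum_{d_1 + \cdots + d_k + d_\tau = d} \;\left(\prod_{i=1}^k \alpha\bigl(n_i, d_i \mid (x-a_i)^{n_i}\bigr)\right) \alpha\bigl(\deg f_\tau, d_\tau \mid f_\tau\bigr).
\]
Then, using the second assertion of Lemma~\ref{lem:exps} — that a coprime factor with no roots in $\F_p$ contributes nothing to the count of $\Q_p$-roots — applied with $g = \prod(x-a_i)^{n_i}$ and $h = f_\tau$, the factor $f_\tau$ drops out entirely: only the term $d_\tau = 0$ survives, and $\alpha(\deg f_\tau, 0 \mid f_\tau) = 1$. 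This leaves
\[
\alpha(n, d \mid f) = \sum_{d_1 + \cdots + d_k = d} \prod_{i=1}^k \alpha\bigl(n_i, d_i \mid (x-a_i)^{n_i}\bigr),
\]
which is manifestly independent of the particular choice of distinct roots $a_1,\dots,a_k$ and of $f_\tau$; hence by~\eqref{eqn:alpha-sigma-f} the same formula computes $\alpha(n,d \mid \sigma)$.

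Next I would identify $\alpha(n_i, d_i \mid (x-a_i)^{n_i})$ with $\beta(n_i, d_i) = \alpha(n_i, d_i \mid x^{n_i})$. The substitution $x \leftarrow x + a_i$ is a measure-preserving bijection of $\Z_p[x]_{n_i}^1$ onto itself (it is unipotent and affine, with trivial Jacobian) that carries $P_{(x-a_i)^{n_i}}$ bijectively onto $P_{x^{n_i}}$ and preserves the number of $\Q_p$-roots (translation by $a_i \in \Z_p$ is a bijection of $\Q_p$). Therefore the distribution of the number of $\Q_p$-roots on $P_{(x-a_i)^{n_i}}$ agrees with that on $P_{x^{n_i}}$, giving $\alpha(n_i, d_i \mid (x-a_i)^{n_i}) = \beta(n_i, d_i)$ and hence~\eqref{eqn:sigmaeq}. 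I expect the only point requiring care to be the bookkeeping in the induction that turns the pairwise statement of Lemma~\ref{lem:exps} into the $k$-fold product — in particular checking that iterated coprimality holds at each stage and that the convolution of binomial-moment identities~\eqref{eq:exp-sum} composes correctly into the single $k$-fold sum over $d_1 + \cdots + d_k = d$; this is routine but is the one place where an error could creep in.
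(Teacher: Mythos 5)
Your proposal is correct and follows essentially the same route as the paper: peel off the factors with no roots in $\F_p$ via the second assertion of Lemma~\ref{lem:exps}, iterate the two-factor convolution identity over the coprime linear-power factors, and identify $\alpha(n_i,d_i \mid (x-a_i)^{n_i})$ with $\beta(n_i,d_i)$ by the measure-preserving translation $g(x)\mapsto g(x+c)$ for a lift $c\in\Z_p$ of $a_i$, concluding via~\eqref{eqn:alpha-sigma-f}. The only cosmetic difference is that you run the full $k{+}1$-fold convolution first and then discard the rootless factor, whereas the paper discards it at the outset; the content is identical.
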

\begin{proof}
Let $f\in\Fp[x]_n^1$ have splitting type $\sigma$. To evaluate $\alpha(n,d \mid f)$, we may ignore the factors of~$f$ of degree greater than~$1$, since if
$f=f_1f_2$ where $\sigma(f_1)=(1^{n_1} \cdots 1^{n_k})$ and $f_2$
has no linear factors, then $\alpha(n,d \mid f) = \alpha(\deg(f_1), d
\mid f_1)$ by the last part of Lemma~\ref{lem:exps}.

Now let $f=\prod_{i=1}^k\ell_i^{n_i}$, where the $\ell_i$ are distinct,
monic, and of degree~$1$.  Using Lemma~\ref{lem:exps} repeatedly gives
\[
\alpha(n,d \mid f) = \sum_{d_1 + \cdots + d_k = d} \,\, \prod_{i=1}^k
\alpha(n_i,d_i \mid \ell_i^{n_i}).
\]
Finally, $\alpha(n_i,d_i \mid \ell_i^{n_i}) = \alpha(n_i,d_i \mid
x^{n_i}) = \beta(n_i,d_i)$, since for fixed $c \in \Z_p$ the map
$g(x) \mapsto g(x+c)$ is measure-preserving on monic polynomials
in $\Zp[x]$ of a given degree.
Thus
\begin{equation}\label{eqn:feq} \alpha(n,d \mid f)
= \sum_{d_1 + \cdots + d_k = d} \,\, \prod_{i=1}^k \beta(n_i,d_i),
\end{equation}
and (\ref{eqn:sigmaeq}) now follows  from~\eqref{eqn:alpha-sigma-f}
and \eqref{eqn:feq}.
\end{proof}

\begin{proof}[Proof of Theorem~$\ref{thm:totsplit}(b)$, 
Equation \eqref{alphatobeta:mom}]
Let $\sigma=(1^{n_1}\cdots1^{n_k}\cdots)\in\mathcal S(n)$ be as in 
Lemma~\ref{lem:alphasfrombetas}. Then, 
by~\eqref{eqn:alpha-sigma} and Lemma~\ref{lem:alphasfrombetas}, 
we have
\begin{equation}
\label{alphasfrombetas}
\alpha(n,d) = p^{-n} \sum_{\sigma \in \stypes(n)}
N_{\sigma} \, \alpha(n,d \mid \sigma) = p^{-n} \sum_{\sigma \in \stypes(n)}
N_{\sigma} 
\sum_{d_1 + \cdots + d_k = d} \,\, \prod_{i=1}^k \beta(n_i,d_i).
\end{equation}
Multiplying by $u^d$ and summing over $d$ gives 
\[ \sum_{d=0}^n  \alpha(n,d) u^d = 
   p^{-n} \sum_{\sigma \in \stypes(n)} 
N_{\sigma} \prod_{1^e \in \sigma} 
\left( \sum_{d=0}^e \beta(e,d) u^d \right). \]
Multiplying by $(pt)^n$, summing over $n$, and using 
Corollary~\ref{cor:special2}, we obtain 
\[ \sum_{d=0}^\infty \left( \sum_{n=0}^\infty \alpha(n,d) (pt)^n 
\right) u^d = \left( \sum_{d=0}^\infty \left( \sum_{n=0}^\infty
\beta(n,d) t^n \right) u^d \right)^p 
(1 - t)^p ( 1 - pt)^{-1}. \]
Finally, multiplying both sides by $1-pt$ yields~\eqref{alphatobeta:mom}.
\end{proof}

\subsubsection{Writing the \texorpdfstring{$\rho$}{ρ}'s in terms 
of the \texorpdfstring{$\alpha$}{α}'s and \texorpdfstring{$\beta$}{β}'s}
\label{sec:abtorho}
The aim of this section is to prove~\eqref{rho:mom}, the second part
of Theorem~\ref{thm:totsplit}(b).

Recall that $\rho(n,d)$ is the expected number of $d$-sets of
$\Qp$-roots of polynomials~$f\in\Zp[x]$ of degree~$n$.
It is evident that this does not change if we restrict to 
primitive polynomials.

Let $f\in\Zp[x]$ be a primitive polynomial of degree~$n$.  
Let $m = \deg(\fbar)$ be the reduced degree of~$f$.
For fixed~$m$ with $0\le m\le n$, the density of primitive polynomials~$f\in\Zp[x]_n$ with
reduced degree~$m$ is $\frac{p-1}{p^{n+1}-1}p^m$.  Therefore, conditioning 
on the value of~$m$, we have
\begin{equation}
\label{getrho-new}
\rho(n,d) = \frac{p-1}{p^{n+1}-1} \sum_{m=0}^n p^m  \rho(n,d, m),
\end{equation}
where $\rho(n,d, m)$ is the expected number of $d$-sets of $\Qp$-roots
of~$f$ as $f\in \Zp[x]_n$ runs over polynomials of degree~$n$ with  reduced degree~$m$.
This expectation does not change if we restrict to $f$ whose reduction 
mod $p$ is monic.

Equation~\eqref{rho:mom} now follows   from~\eqref{getrho-new} and 
the following lemma. 

\begin{lemma}
\label{lem:rho-cond}
We have
\begin{equation}
\label{eqn:rho-cond}
\rho(n,d, m) = \sum_{d_1 + d_2 = d} \alpha(m,d_1)\cdot \beta(n-m,d_2).
\end{equation}
\end{lemma}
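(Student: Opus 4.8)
The plan is to use Corollary~\ref{cor:stability}, which is precisely the independence statement tailored to this situation. Given a primitive $f\in\Z_p[x]_n$ whose reduction $\fbar$ is monic of degree $m$, we are in the set $B_{m,n}$ of that corollary. The corollary gives a measure-preserving bijection $B_{m,n}\to \Z_p[x]_m^1\times P_1^{n-m}$, under which the number of roots of $f$ in $\Z_p$ equals the number of $\Q_p$-roots of the first component $\psi_1(f)\in\Z_p[x]_m^1$, and the number of roots of $f$ in $\Q_p\setminus\Z_p$ equals the number of $\Q_p$-roots of the reversed polynomial $\psi_2(f)^{\rev}\in P_{x^{n-m}}$; moreover these two counts are independent as random variables on $B_{m,n}$. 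So first I would record that $\rho(n,d,m)$ is the expected value of $\binom{X+Y}{d}$, where $X$ counts roots in $\Z_p$ and $Y$ counts roots in $\Q_p\setminus\Z_p$ (every $\Q_p$-root of such an $f$ lies in exactly one of these two sets, since $f$ has degree exactly $n$ in the relevant sense only on the $\Z_p$ side — this is exactly what the reversal on the second factor takes care of).

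Next I would apply the elementary identity $\E\binom{X+Y}{d}=\sum_{d_1+d_2=d}\E\binom{X}{d_1}\E\binom{Y}{d_2}$ for independent $\{0,1,2,\dots\}$-valued random variables (equation~\eqref{eq:exp-sum} in the proof of Lemma~\ref{lem:exps}). This reduces the claim to identifying $\E\binom{X}{d_1}$ with $\alpha(m,d_1)$ and $\E\binom{Y}{d_2}$ with $\beta(n-m,d_2)$. For the first: under the bijection, $\psi_1(f)$ is uniformly distributed on $\Z_p[x]_m^1$ (measure-preserving projection onto the first factor), and $X$ is by definition the number of $\Q_p$-roots of $\psi_1(f)$; hence $\E\binom{X}{d_1}=\alpha(m,d_1)$ by the very definition of $\alpha$. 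For the second: $\psi_2(f)$ is uniform on $P_1^{n-m}$, so $\psi_2(f)^{\rev}(x)=x^{n-m}\psi_2(f)(1/x)$ ranges over $P_{x^{n-m}}$, i.e. over monic degree-$(n-m)$ polynomials over $\Z_p$ reducing to $x^{n-m}$ mod $p$, and the reversal is a measure-preserving bijection $P_1^{n-m}\to P_{x^{n-m}}$ that sends a nonzero root $u$ of $\psi_2(f)$ to the root $1/u$ of $\psi_2(f)^{\rev}$, with $u\notin\Z_p$ iff $1/u\in p\Z_p\subset\Z_p$; thus $Y$ equals the number of $\Q_p$-roots (equivalently $\Z_p$-roots) of $\psi_2(f)^{\rev}$, a uniformly random element of $P_{x^{n-m}}$, so $\E\binom{Y}{d_2}=\alpha(n-m,d_2\mid x^{n-m})=\beta(n-m,d_2)$.

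Putting these together yields $\rho(n,d,m)=\sum_{d_1+d_2=d}\alpha(m,d_1)\,\beta(n-m,d_2)$, which is~\eqref{eqn:rho-cond}. The main point requiring care — and the step I expect to be the real content rather than bookkeeping — is the bijective/measure-preserving dictionary of Corollary~\ref{cor:stability}: namely that splitting off the unit-leading-coefficient part via homogenise–Hensel–dehomogenise (Lemma~\ref{lem:poly-hensel-0}) cleanly separates the $\Z_p$-roots from the $\Q_p\setminus\Z_p$-roots, that it is measure-preserving (this is where Corollary~\ref{lem:mpbj} and the unit-resultant criterion enter, since the monic factor has unit leading coefficient), and that the reversal correctly matches roots outside $\Z_p$ on one side with roots inside $\Z_p$ on the other. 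Since Corollary~\ref{cor:stability} is already proved in the excerpt, the remaining work here is just to quote it and run the independence-of-factorial-moments computation above.
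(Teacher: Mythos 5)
Your proposal is correct and matches the paper's own proof, which simply combines Corollary~\ref{cor:stability} with the factorial-moment identity~\eqref{eq:exp-sum}; your identification of $\E\binom{X}{d_1}=\alpha(m,d_1)$ and $\E\binom{Y}{d_2}=\beta(n-m,d_2)$ is exactly the content the paper leaves implicit in citing that corollary. The extra care you take with the reversal map and the reduction to monic $\fbar$ is consistent with the surrounding discussion in the paper and does not change the argument.
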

\begin{proof}
This follows from Corollary~\ref{cor:stability} and \eqref{eq:exp-sum}.
\end{proof}

\subsubsection{Writing the \texorpdfstring{$\beta$}{β}'s in terms of the \texorpdfstring{$\alpha$}{α}'s}
\label{sec:btoa}

The aim of this section is to prove~\eqref{betatoalpha:mom}, the third
and last part of Theorem~\ref{thm:totsplit}(b).

Fixing $d$, we put $\alpha_n := \alpha(n,d)$ and $\beta_n := \beta(n,d)$.
In the following lemma, we express  $\beta_n$ in terms of $\alpha_s$ for $s\le n$.

\begin{lemma}
\label{lem:btoa}
We have
\begin{equation}
\label{betasfromalphas}
\beta_n =  p^{-\binom{n}{2}}\alpha_n+(p-1) \sum_{0\le s<r<n}p^{-\binom{r+1}{2}}p^s\alpha_s.
\end{equation}
\end{lemma}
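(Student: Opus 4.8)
The plan is to take a polynomial $g$ counted by $\beta_n$ — that is, $g \in \Z_p[x]_n^1$ with $\gbar = x^n$ — and substitute $x \leftarrow px$, clear the common power of $p$ from the coefficients, and track how the resulting polynomial sits among those counted by the $\alpha_s$. Concretely, if $g(x) = x^n + a_{n-1}x^{n-1} + \cdots + a_0$ with all $a_i \in p\Z_p$ (this is what $\gbar = x^n$ means), then $p^n g(x/p) = x^n + p a_{n-1} x^{n-1} + \cdots + p^n a_0$ has $p$-adically integral coefficients, and the $\Q_p$-roots of this polynomial are exactly $p$ times the $\Q_p$-roots of $g$ — so the number of $\Q_p$-roots, hence every $\binom{\text{roots}}{d}$, is preserved. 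The substitution $x \mapsto px$ scales Haar measure on $\Z_p^n$ (the coefficient space) by $p^{-(1 + 2 + \cdots + n)} = p^{-\binom{n+1}{2}}$, but one must be careful: after the substitution we land in a polynomial that is \emph{not} monic (its leading coefficient is a unit but the constant behaviour has changed), and more importantly its reduced degree can be anything from $0$ to $n$. So the natural move is to stratify the image by reduced degree.

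Here is the order I would carry things out. First, set up the measure-preserving dictionary: sending $g(x) \mapsto p^n g(x/p)$ identifies $\{g \in \Z_p[x]_n^1 : \gbar = x^n\}$, with its normalised measure, with an appropriately-measured subset of $\Z_p[x]_n$ consisting of polynomials $f$ with leading coefficient $1$ all of whose \emph{other} coefficients $c_i$ satisfy $v_p(c_i) \ge n - i$; the key point is that $\beta_n$ equals the average of the relevant binomial coefficient of the root-count over this set. Next, stratify this set according to $m := \deg(\fbar) = $ reduced degree of $f$: since $c_n = 1$, we have $m = n$ iff $c_{n-1}$ is a unit, which by the valuation constraint $v_p(c_{n-1}) \ge 1$ never happens, so actually $0 \le m \le n-1$... wait, one must redo this bookkeeping carefully because the constraint is $v_p(c_i)\ge n-i$, so $c_i$ is forced into $p^{n-i}\Z_p$; the reduced degree $m$ is the largest $i<n$ with $v_p(c_i) = 0$, which forces $n - i \le 0$, impossible — so in fact after clearing, $\fbar$ is a monomial and one instead re-homogenises. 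I would instead phrase the substitution on binary forms: homogenise $g(x)$ to $G(x,z) = x^n + a_{n-1}x^{n-1}z + \cdots + a_0 z^n$, apply $(x,z) \mapsto (x, pz)$ (or its inverse), and read off the resulting form's content and reduced degree. The upshot should be a clean partition of the $\beta_n$-set into pieces indexed by a parameter measuring "how many times $p$ divides the new form", which after renormalisation produces copies of the $\alpha$-problem in smaller degree $s$.

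The combinatorial heart is then an inclusion–exclusion / summation-by-parts identity: the density of primitive degree-$n$ polynomials with a given reduced degree $s$ is $\frac{p-1}{p^{n+1}-1}p^s$ (this is quoted just above in \eqref{getrho-new}), and one wants to re-sum the contributions, weighted by the appropriate powers of $p^{-\binom{r+1}{2}}$ coming from the measure-scaling at each "level" $r$, into the stated closed form $\beta_n = p^{-\binom n2}\alpha_n + (p-1)\sum_{0 \le s < r < n} p^{-\binom{r+1}2} p^s \alpha_s$. I expect the main obstacle to be precisely this bookkeeping: correctly identifying which power of $p$ is extracted from the coefficients at each stage, keeping the monic-versus-primitive and "reduces to $x^n$" conditions straight through the substitution, and verifying that the pieces genuinely reassemble into the $\alpha_s$ (with $\alpha_n$ appearing with the boundary coefficient $p^{-\binom n2}$ rather than $p^{-\binom{n+1}2}$, which is the telltale sign that the $r = n$ "level" contributes differently from the internal ones). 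Once the stratification and the valuation count are pinned down, the identity \eqref{betasfromalphas} should follow by a direct, if fiddly, computation, and Lemma~\ref{lem:poly-hensel-0}(b) together with Corollary~\ref{lem:mpbj} supplies all the measure-preservation statements needed to justify it.
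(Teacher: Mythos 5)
Your high-level plan is the same as the paper's (rescale the variable, extract the power of $p$ dividing the result, stratify by the exact power $r$ and the reduced degree $s$, and identify each stratum's contribution with an $\alpha_s$), but as written the substitution goes in the wrong direction, and this is not a cosmetic slip. Starting from a monic $g$ with $\gbar=x^n$, the polynomial $p^ng(x/p)=x^n+pa_{n-1}x^{n-1}+\cdots+p^na_0$ is again monic and congruent to $x^n$ modulo $p$, so this map sends the $\beta$-set into itself and can never produce the $\alpha_s$; this is exactly the dead end you hit mid-paragraph (``impossible''), and your pivot to homogenising and applying $(x,z)\mapsto(x,pz)$ ``or its inverse'' never pins down the correct map. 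What is needed is the opposite rescaling: for $f\in P_{x^n}$ all $\Zp$-roots lie in $p\Zp$, so one passes to $f(px)$, lets $p^r$ be the exact power of $p$ dividing it ($1\le r\le n$), and sets $g=p^{-r}f(px)$, a primitive degree-$n$ polynomial whose reduced degree $s$ satisfies either $0\le s<r<n$ or $s=r=n$; the $\Zp$-roots of $f$ correspond to the $\Zp$-roots of $g$.

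The second, equally serious gap is that the quantitative core of the lemma is left undone. The coefficients in \eqref{betasfromalphas} come precisely from computing, \emph{inside} $P_{x^n}$, the density of the stratum with parameters $(r,s)$: the condition $p^r\mid f(px)$ forces the coefficient of $x^i$ in $f$ to be divisible by $p^{r-i}$ for $0\le i\le r-2$ (relative density $p^{-\binom{r}{2}}$), and for $r<n$ the condition that $g$ have reduced degree exactly $s$ contributes $p^{s-r}(p-1)$, giving $p^{-\binom{r+1}{2}}p^s(p-1)$ for $s<r<n$ and $p^{-\binom{n}{2}}$ for the boundary stratum $(n,n)$. You defer this as ``fiddly bookkeeping'' and instead quote the density $\frac{p-1}{p^{n+1}-1}p^s$ from \eqref{getrho-new}, which is the density of reduced degree $s$ among \emph{all} primitive polynomials of degree $n$ and is not the relevant conditional density here, since the coefficients of $f(px)$ carry forced extra divisibilities. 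Finally, the identification of each stratum's conditional expectation with $\alpha_s$ is not automatic: for $s<r<n$ the polynomial $g$ is not uniformly distributed among polynomials of reduced degree $s$ but is constrained by $p^rg(x/p)\equiv x^n\pmod p$, and one needs Lemma~\ref{lem:poly-hensel-0}(b) together with Corollary~\ref{cor:stability} to see that the expected number of $d$-sets of $\Zp$-roots is nonetheless $\alpha_s$, independent of $r$. You point at the right lemma but do not make this argument, and the measure-scaling factor $p^{-\binom{n+1}{2}}$ you compute for the coefficient map plays no role in the actual proof. So the proposal has the correct shape but is missing the two steps that constitute the proof, and the map you actually write down would not get it started.
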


\begin{proof}
  Recall that
  $\beta_n$ is the expected % value of the random variable~$X$ distributed as the
  number of $d$-sets of $\Zp$-roots of~$f\in P_{x^n}$.  
  All such roots must lie in $p \Z_p$, and thus  correspond to 
  $\Zp$-roots of $f(px)$. To each $f\in
  P_{x^n}$, we associate a pair of integers $(r,s)$ with $0\le s\le r\le
  n$ as follows. Consider $f(px)$, and let $r$ be the largest integer
  such that $p^r\mid f(px)$, so that $1\le r\le n$. Let $s$ be the
  reduced degree of~$g(x)=p^{-r}f(px)$.  Then either $0\le s<r<n$, or
  $s=r=n$.

  The relative density of the subset of $f\in P_{x^n}$ such that
  $p^r\mid f(px)$ is $p^{-\binom{r}{2}}$, since for $0\le i\le r-2$ we
  require the coefficient of~$x^i$ in~$f$ to be divisible by~$p^{r-i}$
  and not just by~$p$.  Given $r<n$, the condition that $g(x)$
  has reduced degree at least~$s$ imposes $r-s-1$ additional
  divisibility conditions, so the relative density of those~$f$ such that
  the reduced degree is exactly~$s$ is $p^{-(r-s-1)}(1-1/p) =
  p^{s-r}(p-1)$.  Thus  the relative density of $f\in P_{x^n}$ with
  parameters $(r,s)$ is given by $p^{-\binom{r}{2}}p^{s-r}(p-1) =
  p^{-\binom{r+1}{2}}p^s(p-1)$ for $0\le s<r<n$.  If $r=n$, then
  $s=r$, and therefore the density of~$f$ with parameters~$(n,n)$ is
  $p^{-\binom{n}{2}}$.

  If $s=r=n$, then $g$ is distributed as an arbitrary element of
  $\Z_p[x]_n^1$, while if $s<r<n$ then $g\in\Z_p[x]_n$ is subject to
  the conditions that $\overline{g}$ has degree~$s$, and that $p^r
  g(x/p) = f(x) \equiv x^n\pmod{p}$.  Hence in both cases, given the
  values of $r$ and~$s$, the conditional expected number of $d$-sets
  of $\Zp$-roots of~$f\in P_{x^n}$ is $\alpha_s$ (independent of $r$);
  in the case $s<r<n$, this follows from
  Lemma~\ref{lem:poly-hensel-0}(b) by considering the restriction of
  the random variable~$X$ in Corollary~\ref{cor:stability} to the
  appropriate subset.  Hence $\beta_n =
  p^{-\binom{n}{2}}\alpha_n+\sum_{0\le
    s<r<n}p^{-\binom{r+1}{2}}p^s(p-1)\alpha_s.$
\end{proof}

\begin{proof}[Proof of \eqref{betatoalpha:mom}]
Taking Equation~\eqref{betasfromalphas} for $n$ and~$n-1$
and subtracting gives
\begin{equation}\label{eqn:subt}
p^{\binom{n}{2}}(\beta_n-\beta_{n-1}) =
(\alpha_n-p^{n-1}\alpha_{n-1}) + (p-1)\sum_{s=0}^{n-2}p^s\alpha_s.
\end{equation}
Now taking Equation (\ref{eqn:subt}) for $n$ and $n-1$ and  again  subtracting yields
\[
p^{\binom{n}{2}}[(\beta_n-\beta_{n-1})  -p^{1-n}(\beta_{n-1}-\beta_{n-2})] =
(\alpha_n-\alpha_{n-1}) - p^{n-1}(\alpha_{n-1}-\alpha_{n-2}),
\]
and this indeed asserts the equality of the coefficient of $t^n$ on both sides
of~\eqref{betatoalpha:mom}.
\end{proof}

We have completed the proof of Theorem~\ref{thm:totsplit}(b).

\begin{remark}
Equations~\eqref{alphasfrombetas}, \eqref{getrho-new}, \eqref{eqn:rho-cond}
and \eqref{betasfromalphas} are sufficient to compute the $\alpha$'s,
$\beta$'s and $\rho$'s.
We~were~motivated to find the neater formulation
in Theorem~\ref{thm:totsplit}(b) by the desire to prove the $p \leftrightarrow
1/p$ symmetries.
\end{remark}

\subsection{Proof of Theorem~\ref{thm:totsplit}(c)}
\label{sec:pfthm3}

Consider a random polynomial of degree $n$ in $\Zp[x]$.
Let $\widetilde{\alpha}(n,d)$ be the expected number of
$d$-sets of roots in $\Zp$. Conditioning on the reduced degree and applying  Corollary~\ref{cor:stability} shows that
\[ \widetilde{\alpha}(n,d) = \sum_{m=0}^n \left(1 - \frac{1}{p} \right) 
\frac{1}{p^m} \alpha(n-m,d) + \frac{1}{p^{n+1}} \widetilde{\alpha}(n,d). \]
This rearranges to give
\begin{equation}
\label{a:cond}
\widetilde{\alpha}(n,d) = \sum_{m=0}^n (1 - p) p^m \alpha(m,d) + p^{n+1} 
\widetilde{\alpha}(n,d).
\end{equation}
In other words, $\widetilde{\alpha}(n,d)$ is a weighted average of the $\alpha(m,d)$
for $m \leq n$.

We now show that $\alpha(n,d)$ and $\widetilde{\alpha}(n,d)$ are equal and independent of $n$, provided that
$n \ge 2d$.

Let $A_n=\Zp[X]_n^1$ denote the set of monic polynomials over
$\Zp$ of degree~$n$, and $B_n$ the set of all polynomials of degree less than
$n$. Then we have $A_n = \{X^n+h: h \in B_n\}$, and both $A_n$ and~$B_n$ may be identified
with $\Zp^n$ and have measure~$1$.  Let $A_n^\spl$ be the subset of
those $f$ in $A_n$ that split completely.  The measure of $A_n^\spl$
is $\alpha(n,n)$.  

Now consider the multiplication map $A_d^\spl \times \Zp[x]_{n-d} \to \Zp[x]_{n}$,
whose image is the set of $f\in\Zp[x]_{n}$ with at least $d$ roots in
$\Zp$; in general, the number of preimages of $f$ in $\Zp[x]_{n}$ is equal to the
number of $d$-sets of roots of $f$ in $\Zp$. 
This implies that $\widetilde{\alpha}(n,d)$ is the $p$-adic measure
of the image of the multiplication map, viewed as a multiset. The change of variables from $A_d^\spl \times \Zp[x]_{n-d}$ 
to $\Zp[x]_{n}$ introduces a Jacobian factor which, 
by Lemma~\ref{lem:jac}, is just the resultant. Therefore, 
\begin{equation}
\label{doubleintegraltilde}
 \widetilde{\alpha}(n,d) = \int_{g \in A_d^\spl} \int_{h \in \Zp[x]_{n-d}}
 |\Res(g,h)|\; dh\; dg.
\end{equation}
 Similarly, we have
\begin{equation}
\label{doubleintegral}
 \alpha(n,d) = \int_{g \in A_d^\spl} \int_{h \in A_{n-d}}
 |\Res(g,h)|\; dh\; dg.
\end{equation}
The following lemma now proves the first part of
Theorem~\ref{thm:totsplit}(c), namely, that $\genA_d(t)$ is a polynomial of degree
at most $2d$. 

\begin{lemma}
\label{indep-lem1}
The expectations $\alpha(n,d)$ and $\widetilde{\alpha}(n,d)$ are equal and independent of $n$ for $n \ge 2d$.
\end{lemma}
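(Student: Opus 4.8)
The plan is to exploit the integral formulas \eqref{doubleintegraltilde} and \eqref{doubleintegral} and show that for $n \ge 2d$ the inner integral over $h$ stabilises. Fix $g \in A_d^\spl$, so $g$ is a product of $d$ monic linear factors over $\Zp$, say $g(x) = \prod_{i=1}^d (x - r_i)$ with $r_i \in \Zp$ (repeated roots occur on a measure-zero set and may be ignored). Then $\Res(g,h) = \prod_{i=1}^d h(r_i)$ up to sign, so the inner integrand $|\Res(g,h)| = \prod_i |h(r_i)|$ depends only on the values $h(r_1), \dots, h(r_d)$. First I would observe that the evaluation map $h \mapsto (h(r_1), \dots, h(r_d))$ from $\Zp[x]_{k}$ (for either $k = n$ monic, giving the $\alpha$ case after absorbing the $x^n$ term, or $k = n-d$ arbitrary, giving the $\widetilde\alpha$ case) to $\Zp^d$ is, for the distinct $r_i$, a surjective $\Zp$-linear (affine) map once $k \ge d-1$ or $k\ge d$ respectively, by Lagrange interpolation; moreover for $k$ large enough it pushes the Haar measure forward to the uniform measure on $\Zp^d$. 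This is the crux: I would check that the fibres of this evaluation map all have equal measure, using that the Vandermonde-type matrix of the interpolation conditions has determinant $\prod_{i<j}(r_i - r_j)$, which is a unit exactly when the $r_i$ are distinct modulo $p$ — but even when they are not, a change of variables (clearing denominators / triangulating the linear system over $\Zp$) shows the pushforward is uniform provided $k$ is at least $d-1$ (for $B_{n}$, i.e.\ $\deg < n$, one needs $n \ge d$, hence $n - d \ge 0$; being careful, the monic case $A_{n-d}$ needs $n - d \ge d - 1$, i.e.\ $n \ge 2d - 1$, and the $\Zp[x]_{n-d}$ case needs $n - d \ge d$, i.e.\ $n \ge 2d$). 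I would phrase this cleanly as: once the degree bound holds, $\int_h \prod_i |h(r_i)|\, dh = \int_{(t_1,\dots,t_d) \in \Zp^d} \prod_i |t_i|\, dt = \left(\int_{\Zp} |t|\, dt\right)^d = \left(\tfrac{p}{p+1}\right)^d$, independently of $g$ and of $n$.

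Granting this, both \eqref{doubleintegraltilde} and \eqref{doubleintegral} collapse to $\alpha(d,d) \cdot (p/(p+1))^d$ — wait, more precisely to $\bigl(\text{measure of } A_d^\spl\bigr) \cdot (p/(p+1))^d = \alpha(d,d)\,(p/(p+1))^d$ — as soon as $n \ge 2d$, which is visibly independent of $n$ and equal for $\alpha(n,d)$ and $\widetilde\alpha(n,d)$. (For $n$ in the range $2d-1 \le n < 2d$ one gets the stabilised value for $\alpha$ but not yet for $\widetilde\alpha$; the lemma as stated only claims $n \ge 2d$, so this suffices, though I'd remark the $\widetilde\alpha$ integral over $\Zp[x]_{n-d}$ genuinely needs $n-d \ge d$.) I would then note that this also pins down $\genA_d(t)$: since $\alpha(n,d)$ agrees with the stabilised constant for all $n \ge 2d$, the generating series $\sum_n \alpha(n,d) t^n$ differs from a rational function with a single pole at $t = 1$ by a polynomial of degree $< 2d$, so $\genA_d(t) = (1-t)\sum_n \alpha(n,d)t^n$ is a polynomial of degree at most $2d$, as claimed.

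The main obstacle is the measure-theoretic claim that the evaluation map $h \mapsto (h(r_i))_i$ pushes Haar measure to uniform measure on $\Zp^d$ \emph{for all} choices of $r_i \in \Zp$, not merely those distinct mod $p$. When two $r_i$ coincide modulo $p$ the interpolation matrix is not invertible over $\Zp$, and one has to argue instead that the image is still all of $\Zp^d$ with equal fibre measures — the honest way is to do an explicit triangular change of coordinates on the coefficient space realising the evaluation map as a coordinate projection (after an integral unimodular substitution), which requires the degree to be large enough to have "enough room," and this is exactly where the bound $n \ge 2d$ enters. I would handle this by induction on $d$: peel off one root $r_d$, write $h(x) = (x - r_d) q(x) + h(r_d)$, note $q$ ranges over polynomials of degree one less with uniform measure and the pair $(q, h(r_d))$ is equidistributed, and apply the inductive hypothesis to $q$ evaluated at $r_1, \dots, r_{d-1}$ — each peeling step costs one in the degree, giving the clean count of how large $n$ must be. An alternative, perhaps cleaner, route is to invoke Corollary~\ref{cor:stability} / Corollary~\ref{cor:indep} together with \eqref{a:cond} to reduce everything to the already-established relations among the $\alpha$'s; but the direct integral computation is more transparent and yields the explicit stabilised value $\alpha(d,d)\,(p/(p+1))^d$ as a bonus.
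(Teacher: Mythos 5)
Your starting point --- the integral formulas \eqref{doubleintegraltilde} and \eqref{doubleintegral} and the reduction to the inner integral over $h$ for a fixed $g\in A_d^\spl$ --- is exactly the paper's, but the step you yourself flag as ``the main obstacle'' is a genuine error, not a technicality that can be patched. The pushforward of Haar measure under $h\mapsto(h(r_1),\dots,h(r_d))$ is \emph{not} uniform on $\Zp^d$ for all split $g$: if $r_i\equiv r_j\pmod{p^k}$ then $h(r_i)\equiv h(r_j)\pmod{p^k}$ for \emph{every} $h\in\Zp[x]$, so the image of the evaluation map is contained in a subset of $\Zp^d$ of measure $p^{-k}$, and no triangular change of coordinates (nor the peel-off-one-root induction, which only shows $h(r_i)-h(r_d)$ is divisible by $r_i-r_d$) can make it measure-surjective. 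For $d\ge 2$ the set of $g\in A_d^\spl$ with two roots congruent mod $p$ has positive measure, so this cannot be ignored; the inner integral genuinely depends on $g$. Consequently your claimed stabilised value $\alpha(d,d)\,(p/(p+1))^d$ is false: for $d=2$ it gives $\alpha(n,2)=p^3/(2(p+1)^3)$, whereas the paper's computation gives $2\alpha(n,2)=p^3(p^3+p+1)/\bigl((p+1)^2(p^4+p^3+p^2+p+1)\bigr)$ for $n\ge4$; the two agree only as $p\to\infty$, which is exactly why the heuristic of ``independent uniform values at the roots'' looks right to leading order but is not exact.

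The correct conclusion to extract from your peeling idea is weaker, and is what the paper actually proves: divide $h$ by the monic polynomial $g$ (splitness is irrelevant), writing $h=qg+r$ with $q\in\Zp[x]_{n-2d}$ and $r\in B_d$ --- this is precisely where $n\ge 2d$ enters. Then $\Res(g,h)=\Res(g,r)$, and the map $(q,r)\mapsto qg+r$ is a measure-preserving bijection (the change of basis is triangular with unit diagonal since $g$ is monic), so $\int_h|\Res(g,h)|\,dh=\int_{r\in B_d}|\Res(g,r)|\,dr$, whether $h$ ranges over $A_{n-d}$ or over $\Zp[x]_{n-d}$. This inner integral is independent of $n$ and the same in both cases, but it still depends on $g$; integrating it over $g\in A_d^\spl$ gives the lemma, with no need (and no way) to evaluate it in closed form. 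So keep your framework, but replace the explicit evaluation at the heart of your argument by this $n$-independence statement; as written, the proposal proves a false identity and hence does not establish the lemma.
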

\begin{proof}
By~\eqref{doubleintegraltilde} and ~\eqref{doubleintegral} it suffices to show that for each fixed $g$
in $A_d^{\spl}$, the values of the inner integrals $\int_{h \in \Zp[x]_{n-d}}
 |\Res(g,h)|\; dh$ and $\int_{h \in A_{n-d}}
|\Res(g,h)| dh$ are equal and independent of $n$ for $n \ge 2d$.  Our argument is
quite general, in that we only use that $g$ is monic, not that it is
split.

We assume that $n \ge 2d$, and write each $h\in\Zp[x]_{n-d}$ uniquely as
$h=qg+r$ with $q\in\Zp[x]_{n-2d}$ and $r\in B_d$.  This sets up a
bijection $(q,r) \mapsto h=qg+r$ from $\Zp[x]_{n-2d} \times B_d$ to
$\Zp[x]_{n-d}$ (using here that $n-d \ge d$). Now using $\Res(g,h)=\Res(g,r)$, and the fact that our bijection has trivial Jacobian (the change of basis matrix is triangular with 1's on the
diagonal since $g$ is monic), we deduce that
\[ \int_{h \in \Zp[x]_{n-d}} |\Res(g,h)| dh = \int_{q \in \Zp[x]_{n-2d}} \int_{r \in
B_d} |\Res(g,r)| dr dq = \int_{r \in B_d} |\Res(g,r)| dr, \] since the
integral over~$q\in \Zp[x]_{n-2d}$ is just the measure of $\Zp[x]_{n-2d}$  
which
is~$1$. In an identical manner, we have
\[ \int_{h \in A_{n-d}} |\Res(g,h)| dh = \int_{q \in A_{n-2d}} \int_{r \in
B_d} |\Res(g,r)| dr dq =\int_{r \in B_d} |\Res(g,r)| dr. \]
Hence
\[
\widetilde{\alpha}(n,d)
   = \alpha(n,d) = 
   \int_{g \in A_d^\spl} \int_{r \in B_d} |\Res(g,r)|\; dr\; dg 
   \]
for $n\geq 2d$. 
The inner integral above  clearly depends on $g$ and $d$, but not on
$n$. 
\end{proof}

We now turn to proving the remaining parts of Theorem~\ref{thm:totsplit}(c).
By Lemma~\ref{indep-lem1}, we have that $\genA_d(t)$ is a polynomial
of degree at most $2d$. Thus, fixing any $n \ge 2d$, we may write
\begin{equation}
\label{writeaspoly}
 \genA_d(t) = (1-t) \sum_{m=0}^n \alpha(m,d) t^m + \alpha(n,d)t^{n+1}. 
\end{equation}
Lemma~\ref{indep-lem1} allows us to replace $\widetilde{\alpha}(n,d)$
by $\alpha(n,d)$ in~\eqref{a:cond}.
Taking $t=1$ in~\eqref{writeaspoly} shows that
the left hand side of~\eqref{a:cond} is
$\genA_d(1)$. Taking $t=p$ in~\eqref{writeaspoly} shows that the right hand side 
of~\eqref{a:cond} is $\genA_d(p)$. Therefore, 
$\genA_d(1) = \genA_d(p)$. 

Since $\genA_d$ is a polynomial of
  degree at most $2d$, it follows by~\eqref{alphatobeta:mom}, or
  equally~\eqref{fe:alphabeta}, that $\genB_d$ is a polynomial of
  degree at most $2d$. Directly from the definitions of $\genA_d$ and $\genB_d$, these results are equivalent to the statements that $\alpha(n,d) = \genA_d(1)$ and $\beta(n,d) = \genB_d(1)$ for all $n \geq 2d$. 
  
  It follows by~\eqref{rho:mom} that $\genR_d$ is
  a polynomial of degree at most $2d$.  
  To prove the stabilisation result for the $\rho(n,d)$,  
  we use the fact we just proved that 
  $\genA_d(1) = \genA_d(p)$.  It
  follows by~\eqref{alphatobeta:mom}, or equally~\eqref{fe:alphabeta},
  that $\genB_d(1) = \genB_d(1/p)$.   By~\eqref{rho:mom}, we then have
  $\genR_d(1) = \genR_d(1/p)$.  We may therefore write
  $\genR_d(t) = \genR_d(1) + (1-t)(1-pt) F(t)$ where $F$ has degree at
  most $2d - 2$. Finally, from the definition of $\genR_d$,  we have
  $\rho(n,d) = \genR_d(1)$ for all $n > \deg(F)$.

This completes the proof of Theorem~\ref{thm:totsplit}(c).

\begin{remark}
The values of the $\widetilde\alpha(n, d)$, which may be computed from the $\alpha(n, d)$ using (\ref{a:cond}), may also be of independent interest. For example, the expectation $\widetilde\alpha(n, 1) = p/(p + 1)$ is computed by
Caruso~\cite{Caruso}, and also follows from the one-variable case of the work of Evans~\cite[Theorem~1.2]{Evans}.
\end{remark}

\section{Asymptotic results}\label{section4}

In this section, we prove Proposition \ref{prop:largep}. The proof is essentially independent
of our earlier results, although for convenience we will reference some of our 
earlier formulas. We begin with a well-known lemma (see, e.g., \cite[p. 256]{Cohen1970} for a proof).

\begin{lemma}
\label{lemma:asymptoticfactorization}
Let $f \in \F_p[x]$ be a monic polynomial of degree $n$, and
$C \subset S_n$ a conjugacy class $($i.e., a cycle type$)$ corresponding to the
partition $d_1 + \cdots + d_t = n$. Let $\lambda(C,p)$ be the probability that 
$f$ factors into irreducible polynomials of degrees $d_1, \ldots, d_t$, respectively. Then
$\lambda(C,p) \to |C|/n!$ as $p \to \infty$. 
\end{lemma}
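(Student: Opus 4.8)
The plan is to reduce the probabilistic statement to a polynomial count over $\F_p$ and apply the elementary asymptotics for the number $N_d$ of irreducible monic polynomials of degree $d$. Write $\lambda(C,p) = M/p^n$, where $M$ is the number of monic $f \in \F_p[x]$ of degree $n$ that factor as a product of irreducibles of degrees $d_1,\dots,d_t$. First I would discard the non-squarefree contribution: by a union bound, the number of monic $f$ of degree $n$ divisible by the square of some irreducible of degree $e \le n/2$ is at most $\sum_{e \le n/2} N_e\, p^{n-2e} = O(p^{n-1})$, so $M$ differs by $O(p^{n-1})$ from the number of \emph{squarefree} such $f$, which is exactly the number $N_\sigma$ of monic polynomials of splitting type $\sigma = (d_1\, d_2\, \cdots\, d_t)$.

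Next I would invoke \eqref{numberofpolynomialsoffixedsplittingtype} in the case where all the multiplicities $e_j$ equal $1$: writing $m_d = m_d(\sigma)$ for the number of indices $j$ with $d_j = d$ (so $\sum_d d\, m_d = n$), the multinomial coefficients there are all $1$, and we get $N_\sigma = \prod_{d=1}^n \binom{N_d}{m_d}$. From the M\"obius formula $N_d = \frac1d \sum_{k \mid d}\mu(k)p^{d/k}$ one has $N_d = \frac1d p^d\bigl(1 + O(1/p)\bigr)$, with implied constant depending only on $n$; since $m_d$ is bounded and $N_d \to \infty$, it follows that $\binom{N_d}{m_d} = \frac{1}{m_d!}N_d^{m_d}\bigl(1 + O(1/p)\bigr) = \frac{1}{m_d!}\Bigl(\frac{p^d}{d}\Bigr)^{m_d}\bigl(1 + O(1/p)\bigr)$. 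Multiplying over the finitely many $d$ with $m_d > 0$ and using $\sum_d d\,m_d = n$ yields
\[
N_\sigma = \frac{p^n}{\prod_{d} d^{m_d}\, m_d!}\bigl(1 + O(1/p)\bigr).
\]

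Combining the two steps, $\lambda(C,p) = N_\sigma/p^n + O(1/p) \to 1/\prod_d d^{m_d}\,m_d!$ as $p \to \infty$. On the symmetric-group side, the conjugacy class $C$ of cycle type $(d_1,\dots,d_t)$ has centralizer of order $\prod_d d^{m_d}\,m_d!$, so $|C| = n!/\prod_d d^{m_d}\,m_d!$; hence the limit equals $|C|/n!$, as claimed. There is no real obstacle here: the only point requiring a moment's care is the meaning of ``factors into irreducibles of degrees $d_1,\dots,d_t$'' when some degrees coincide (repeated irreducible factors are then permitted), but the $O(p^{n-1})$ bound on non-squarefree polynomials makes this immaterial for the limit, and all error terms are automatically uniform since $n$ is fixed.
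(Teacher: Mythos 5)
Your proof is correct. The paper itself gives no argument for this lemma --- it is stated as well known, with a pointer to Cohen~\cite{Cohen1970} --- so there is nothing internal to compare against; your write-up is essentially the standard proof that the citation stands in for. Concretely, you correctly reduce to the squarefree case via the $O(p^{n-1})$ bound on monic polynomials divisible by the square of an irreducible (which also disposes of the ambiguity about repeated degrees $d_i$), then specialise the paper's count \eqref{numberofpolynomialsoffixedsplittingtype} to exponents all equal to $1$ to get $N_\sigma=\prod_d\binom{N_d}{m_d}$, use $N_d=\tfrac1d p^d\bigl(1+O(1/p)\bigr)$ to obtain $N_\sigma=p^n\bigl(\prod_d d^{m_d}m_d!\bigr)^{-1}\bigl(1+O(1/p)\bigr)$, and identify $\prod_d d^{m_d}m_d!$ as the centralizer order, so the limit is $|C|/n!$. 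All error terms are uniform for fixed $n$, as you note, so the argument is complete; it has the minor added value over the bare citation of being self-contained and of exhibiting the explicit $1+O(1/p)$ rate.
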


If $\sigma = (d_1^{e_1}\,d_2^{e_2}\,\cdots\,d_t^{e_t})
\in \stypes(n)$ is a splitting type of degree $n$, then by \eqref{numberofpolynomialsoffixedsplittingtype}, we have that 
$N_\sigma$ is a polynomial in $p$ of degree $\sum_{i=1}^t d_i$. Therefore, if 
$e_i>1$ for at least one $i\in\{1,2,\ldots,t\}$, then

\[
\lim_{p\rightarrow\infty}\dfrac{N_{\sigma}}{p^n}=0.
\]
By~\eqref{eqn:alpha-sigma}, to compute $\lim_{p\rightarrow\infty}\alpha(n,d)$, it thus suffices to consider only $\sigma
\in \stypes(n)$ that correspond to factorizations without multiple factors, i.e., to partitions $d_1 + \cdots + d_t = n$ of $n$. It is sufficient to consider only those squarefree polynomials modulo $p$ that have $r\geq d$ distinct  roots (since all of these roots lift by Hensel's lemma), where each such polynomial is  weighted by $\binom{r}{d}$. By Lemma \ref{lemma:asymptoticfactorization}, we wish to count all permutations in $S_n$ with $r$ fixed points, where each such permutation is  weighted by~$\binom{r}{d}$. 
The total weighted number of such permutations is 
$\binom{n}{d}(n-d)!=\frac{n!}{d!}$,  because we can choose $d$ fixed~points in~$\{1,2,\ldots,n\}$,  and then randomly permute the other $n-d$ numbers. It follows that
\begin{equation}
\label{alphalimit}
\lim_{p\rightarrow\infty}\alpha(n,d)=\dfrac{1}{n!}\dfrac{n!}{d!}=\dfrac{1}{d!}.
\end{equation}

By \eqref{getrho-new}, we  
have $\lim_{p\rightarrow \infty}\rho(n,d)=\lim_{p\rightarrow \infty}\rho(n,d,n)$. Either directly from the definitions, or as 
a special case of~\eqref{eqn:rho-cond}, we have $\rho(n,d,n) = \alpha(n,d)$. Therefore, 
\[
\lim_{p\rightarrow \infty}\rho(n,d)=\lim_{p\rightarrow \infty}\alpha(n,d)=\dfrac{1}{d!},
\]
proving
Proposition \ref{prop:largep}(a) for $\rho$ and $\alpha$.

Using \eqref{exp-prob}, and its analogue for $\alpha^*$, we then have
\[
\lim_{p\rightarrow \infty}\rho^*(n,r) = \lim_{p\to\infty}\alpha^*(n,r)=\sum_{d=0}^n (-1)^{d-r} \binom{d}{r} \dfrac{1}{d!}=\frac{1}{r!} \sum_{d=0}^{n-r} (-1)^{d}\dfrac{1}{d!},
\]
proving
Proposition \ref{prop:largep}(b).

To prove the large $p$ limits involving $\beta$, we note that if $d = n-1$ or $d=n$,  
then  \eqref{betasfromalphas} is just $$\beta(n,d)=p^{-\binom{n}{2}}\alpha(n,d),$$ while if  
$d < n-1$, then  Equation~\eqref{betasfromalphas} takes the shape 
\[
\beta(n,d)=p^{-\binom{d+1}{2}}\alpha(d,d)+O(p^{-\binom{d+1}{2}-1}).
\]
From the previous two equations and~\eqref{alphalimit}, we see that
\[
\lim_{p\rightarrow \infty}p^{\binom{n}{2}}\beta(n,n)=\frac{1}{n!}  \quad\mbox{ and }\quad
\lim_{p\rightarrow \infty}p^{\binom{d+1}{2}}\beta(n,d)=\frac{1}{d!}\mbox{ for $d<n$},
\]
proving Proposition \ref{prop:largep}(a) for $\beta$. 

The analogue of \eqref{exp-prob} for $\beta^*$ 
shows that for $r \le n-2$,  we have 
\[ \lim_{p\rightarrow \infty}p^{\binom{r+1}{2}}\beta^*(n,r)=\frac{1}{r!}.\] 
Since $\beta^*(n,n) = \beta(n,n)$, this
completes the proof of Proposition~\ref{prop:largep}(c). 
Note that  $\beta^*(n,n-1)=0$, so there is no need to compute the limits in this case.

If we take  $r=0$ in Proposition~\ref{prop:largep}, we see that 
$$\lim_{p \to \infty} \rho^*(n,0) = \sum_{d=0}^n (-1)^d/d!. $$
The reader may recognise this as the answer to the derangements problem,
i.e., the probability that a random permutation on $n$ letters has no fixed
point. This is the case  because, by Lemma~\ref{lemma:asymptoticfactorization}, monic polynomials without $\Q_p$-roots correspond,  in the large $p$ limit, to permutations without fixed points. 
Similarly, the limit  $\lim_{p \to \infty} \rho^*(n,r) = (1/r!) \sum_{d=0}^{n-r} (-1)^d/d! $ is equal to the probability that a random permutation on $n$ letters has exactly $r$ fixed points. 

\subsection*{Acknowledgments}
We thank the CMI-HIMR Summer School in Computational Number Theory
held at the University of Bristol in June 2019, where this work began.
We also thank Xavier Caruso for kindly sharing with us an earlier
draft of his paper~\cite{Caruso}, and Jordan Ellenberg, Hendrik Lenstra, Steffen
M\"uller, Bjorn Poonen, Lazar Radi\v{c}evi\'c, Arul Shankar, and Jaap
Top for many helpful conversations.

The first author was supported by a Simons Investigator Grant and NSF
grant~DMS-1001828.  The second author was supported by the Heilbronn
Institute for Mathematical Research. The fourth author was supported
in part by DFG-Grant MU 4110/1-1.

We thank the referees for a careful reading of our paper, and for
providing the additional references at the end of
Section~\ref{subsec:previous}.

\end{document}